\crefname{hypothesis}{Hypothesis}{Hypotheses}
\title{Analyzing and Mimicking the Optimized Flight Physics of Soaring Birds: A Differential Geometric Control and Extremum Seeking System Approach with Real Time Implementation}
\author{Sameh A. Eisa \thanks{Aerospace Engineering and Engineering Mechanics Department at the University of Cincinnati, OH, USA
  (\email{eisash@ucmail.uc.edu, sameheisa235@hotmail.com}).}
\and Sameer Pokhrel \thanks{Aerospace Engineering and Engineering Mechanics Department at the University of Cincinnati, OH, USA
  (\email{pokhresr@mail.uc.edu}).}
}
\newcommand*{\addFileDependency}[1]{
  \typeout{(#1)}
  \@addtofilelist{#1}
  \IfFileExists{#1}{}{\typeout{No file #1.}}
}
\begin{document}

\maketitle

\begin{abstract}
For centuries, soaring birds -- such as albatrosses and eagles -- have been mysterious and intriguing for biologists, physicists, aeronautical/control engineers, and applied mathematicians. These fascinating biological organisms have the ability to fly for long-duration while spending little to no energy, as they utilize wind for gaining lift. This flight technique/maneuver is called dynamic soaring (DS). For biologists and physicists, the DS phenomenon is nothing but a wonder of the very elegant ability of the bird's interaction with nature and using its physical ether in an optimal way for better survival and energy efficiency. For the engineering community, said DS phenomenon, is a source of inspiration and an unequivocal promising chance for bio-mimicking. In literature, significant work has been done on modeling and constructing control systems that allow the DS maneuver to be mimicked. Indeed, if one is able to construct a control system model that achieves what soaring birds actuate, then this developed framework, arguably, is able to capture many details about the mimicked biological behavior; hence, a bridge can be built between the biological and physical/engineering sides. However, mathematical characterization of the DS phenomenon in literature has been limited to optimal control configurations that utilized developments in numerical optimization algorithms along with control methods to identify the optimal DS trajectory taken (or to be taken) by the bird/mimicking system. Unfortunately, all said methods are highly complex and non-real-time. Hence, the mathematical characterization of the DS problem, we believe, appears to be at odds with the phenomenon/birds-behavior. In this paper, we provide a novel two-layered mathematical approach to characterize, model, mimic, and control DS in a simple and with \textit{real-time} implementation, which we believe, decodes better the biological behavior of soaring birds. The first layer will be a differential geometric control formulation and analysis of the DS problem, which allow us to introduce a control system that is simple, yet controllable. The second layer will be a linkage between the DS philosophy and a class of dynamical control systems known as extremum seeking systems. This linkage provides the control input that makes the DS a real-time reality. We believe our framework captures more of the biological behavior of soaring birds and opens the door for geometric control theory and extremum seeking systems to be utilized in systems biology and natural phenomena. Simulation results are provided along with comparisons with powerful optimal control solvers to illustrate the advantages of the introduced method.               
\end{abstract}

\begin{keywords}
Soaring Birds; Geometric Control; Extremum Seeking Systems; Albatross; Nonlinear Controllability; Dynamic Soaring; Optimal Control; Bird Flight; Bio Mimicking    
\end{keywords}

\begin{AMS}
92-10, 93B07, 53Z05,92B05, 53Z10
\end{AMS}

\section{Introduction}\label{sec:intro}
\subsection{Optimized flight physics of soaring birds}\label{sec:soaring_birds}
Soaring birds are many kinds and species \cite{ehrlich1994birdwatcher}, such as but not limited to albatrosses and eagles. The albatross flight physics has captured the interests of biologists, physicists, and engineers for its magnificent energy-efficient ability. Said flight maneuver, known as dynamic soaring (DS) \cite{langelaan2009enabling,mir2018optimal,MITbousquet2017optimal,richardson2011albatrosses,mir2021stability} enables soaring birds -- such as albatrosses -- to fly almost for free and with barely feeling the need to flap their wings due to their ability to acquire lift from the wind itself -- see the early Nature papers on the DS phenomenon \cite{rayleigh1883soaring,wilson1975sweeping}. The albatross optimized flight has been perceived as exceptionally interesting and at least fascinating and mysterious; this can be noticed in observations that go back to Lord Rayleigh \cite{rayleigh1883soaring} and Leonardo da Vinci \cite{richardson2019leonardo}. Refer \Cref{fig:dynamicsoaring} for a graphical depiction of the DS maneuver, which is a cycle typically consisting of four phases \cite{mir2018review}: (a) wind-ward climb, (b) high altitude turn, (c) leeward descend, and (d) low altitude turn. The DS cycle can be considered, ideally, as an energy neutral (i.e., constant energy level) or near neutral (i.e., near constant energy level). That is, thanks to the DS maneuver mentioned above and in \Cref{fig:dynamicsoaring}, the energy spent by the bird in the short duration of the execution of the DS cycle is substantially smaller when compared to the total energy it possesses at the beginning of the cycle; hence, the energy level is typically assumed to be a constant (or near constant) during the DS cycle -- refer \cite{mir2018review} for a detailed review on DS and \cite{mohamed2022opportunistic} for review on soaring birds. In order to perform DS, soaring birds fly into the headwind and gain lift due to the physical property known as ``wind shear." Wind shear occurs in areas where a significant change of wind speed with altitude takes place, which often happens above sea/oceans \cite{MITbousquet2017optimal,mir2018review}. The gain of lift allows the bird to gain height and climb in altitude up to a point. Then, the bird performs its high altitude turn when it can no longer sustain height gain and starts descending -- trading gained potential energy with kinetic energy. At the lowest altitude possible, the bird performs its low altitude turn and starts another DS cycle. By the end of every DS cycle, the bird ends up traveling a distance for free -- or at least partially for free, achieving energy neutrality or near neutrality as discussed earlier. Of note, the DS biological phenomenon has been validated/observed experimentally \cite{sachs2013experimental,yonehara2016flight}.

\begin{figure}[ht]
    \centering
\includegraphics[width=1\textwidth]{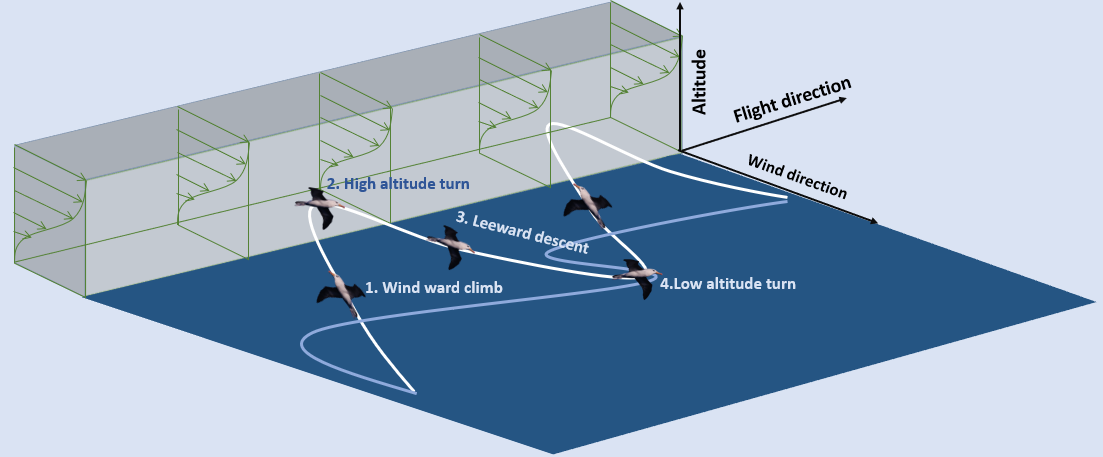}
\caption{DS maneuver in presence of wind shear}
\label{fig:dynamicsoaring}
\end{figure}

From a physical point of view \cite{denny2008dynamic,wilson1975sweeping}, the energy neutrality (or near neutrality) implies that the DS phenomenon is a conservative-like flying technique, which is extremely rare -- if not unique. That is, the energy harvested from the wind counters the energy -- traditionally lost in flight dynamic systems due to the non-conservative drag force. One can see that the DS phenomenon can appear to be more like an optimal control and/or dynamic optimization problem from an applied mathematics point of view \cite{cone1964mathematical,flanzer2012robust,sachs2005application}. As a matter of fact, this has been the trend of studying this problem for decades as documented in the review paper \cite[section 6]{mir2018optimal}. Many numerical optimization methods and techniques (see \cite{betts2010practical} for example) have been used for solving DS as a problem of finding the optimal trajectory -- that if taken by the bird/mimicking system, an energy neutral (or near neutral) DS cycle is achieved.    

\subsection{Motivation: dynamic soaring as a control-affine extremum seeking system, formulated and analyzed via geometric control theory}\label{sec:motivation} Realizing that this work connects and puts together multiple fields of studies, we believe the reader may benefit from explaining our motivation in detail. We could not find a better statement to represent the bigger picture of our motivation other than that of Brown, RHJ in his flight of birds study \cite{brown1963flight}: ``Bird flight can be studied neither as a problem in physics nor from the standpoint of biology alone. Both points of view are necessary and complementary." The fact that the DS problem has always been studied as a complex, computationally expensive, non-real-time optimal control problem \cite{mir2018review}, makes us believe that the decades-long literature of this problem is at odds with the biological behavior of albatross or other soaring birds. Indeed, soaring birds do not need complex computational power or a long time before assessing their action (control) as observed in nature \cite{sachs2013experimental}. Thus, we turned our attention to finding a mathematical control theory that fits more -- and can be natural to -- the DS phenomenon itself as done by soaring birds. In order to close and narrow down the gap between the physical/engineering side and the biological side, we need to find the right mathematical theory and tools that achieve two targets. First, we need a simple mathematical formulation for the DS problem, which assumes simple control inputs like those actuated by the bird (i.e., pitching and rolling \cite{mir2018review,MITbousquet2017optimal}); however, this simple formulation needs to be proven controllable (i.e., with the given simple control inputs, the system representing the bird/mimicking system should be able to reach anywhere in the state space). Second, we need to find a logical control law that actuates the DS system in real-time, hypothetically, similar to the bird. For the first target, we asses that geometric control theory (GCT) is the right mathematical control theory candidate. GCT encompasses elegant tools and methods from differential geometry, which enable controllability analysis and motion planning of underactuated (i.e., the number of control inputs is less than the dimension of the state space), nonlinear systems \cite{bullo2019geometric,sussmann1972controllability,hermann1977nonlinear,sussmann1987general,stefani1985local,lafferriere1991motion}; remarkably, this is exactly what DS is: a highly nonlinear, underactuated system \cite{mir2018review}. Hence, we are motivated to utilize GCT especially given its revived use in many recent useful theoretical and application advancements, where crucial information/conclusions need analysis beyond linearization or first-order approximation; see, for example, GCT applications in averaging analysis \cite{Maggia2020higherOrderAvg}, flight dynamics and control \cite{hassan2017geometric, hassan2019differential}, and network systems \cite{menara2020conditions}, among others.  
For the second target, we make a linkage between DS and a powerful class of control systems called extremum seeking control (ESC) systems \cite{TanHistory2010}. Said systems (ESCs) operate by steering a given dynamical system to the minimum/maximum (extremum) of an objective function, autonomously, and in real-time \cite{ariyur2003real,scheinker2017model,KRSTICMain}. Moreover, ESC systems are possibly operable with no model at all, and they do not require access to the mathematical expression of the objective function -- provided that the objective function is accessible through measurements. In other words, ESC systems are model-free control system technique that performs real-time optimization. Now, it can be seen why ESC systems can be thought of as a natural candidate to describe/decode optimized biological behaviors that are performed in real-time by organisms in nature. In fact, it is reasonable to assess that albatrosses (or soaring birds) conduct DS without analyzing the mathematical expressions of the wind profile, aerodynamics, or a given objective function. Moreover, they conduct the flight of DS in real-time \cite{sachs2013experimental} while sensing (measuring) their environment; for instance, in \cite{pennycuick2008information_nostrils,brooke2002gusts_nostrils} it is observed that the nostrils of the albatross act as if they are airspeed sensor. Hence, we believe the linkage here between ESC systems and DS is reasonable by itself, but also has two other dimensions which connect our two targets comprehensively: (i) ESC theory has adopted systems that are control-affine, which operate in a way that is more like motion planning methods of GCT, see for example \cite{ESCTracking,DURR2013}; and (ii) ESC systems, in general, have been strongly successful in characterizing and decoding many biological phenomena in literature such as fish locomotion \cite{fish1}, and chemotaxis-like motion in bacteria in \cite{liu2010stochastic}.

\subsection{Contribution and organization of this paper}\label{sec:contribution}
In this paper, we provide a comprehensive, novel, and real-time implementable framework that provides a control theory and structure for the DS phenomenon. We reason and make the case that the introduced framework is capturing/reflecting more of what soaring birds do in nature -- enabling closer steps towards practical and industrial possibilities of bio-mimicry of soaring birds by unmanned systems and robotics. Realizing that this paper incorporates concepts and studies in flight physics, geometric control theory, and extremum seeking systems, it is important to provide background from these fields before introducing the direct contributions of the paper. Section \ref{sec:two_prob_form} provides a brief overview of the nonlinear, non-real-time optimal control formulation typically adopted in the decades-long literature of DS \cite{mir2018review}. We do this for two main reasons: (i) the reader of this paper may be interested to learn more about the decades-long literature of studying DS to help them in a way to also see the novelty of this work; and (ii) later in section \ref{sec:simulations} we compare our newly introduced approach with an optimal control solver representing the literature; hence, the reader can follow how the optimal control solver actually operates inline with section \ref{sec:two_prob_form}. Section \ref{sec:three_gct} provides a brief, yet self-explanatory, background on nonlinear controllability analysis using GCT followed by our proposed differential geometric control formulation of the DS problem. The proposed DS system (in control-affine form) is very simple, and yet, we rigorously prove its controllability. In section \ref{sec:four_esc}, we propose a particular control law for the formulated control-affine DS system, making it effectively a control-affine ESC system. Hence, similar to ESC systems, our very simple control structure is operable even on a model-free basis with only access to measurements of the objective function (aiming at maximizing the energy gain); this is no stranger and so natural to soaring birds (recall for example that the nostrils of albatrosses act as if they are airspeed sensor \cite{pennycuick2008information_nostrils,brooke2002gusts_nostrils}). In that same section, we provide simulation results that show strong comparability between our real-time control-affine ESC implementation and non-real-time implementation done via GPOPS2 \cite{gpops2}, a highly capable optimal control solver which uses hp-adaptive Gaussian quadrature collocation methods and sparse nonlinear programming. In addition to the direct contributions that are related to DS and its GCT formulation/analysis and ESC characterization, this paper serves as yet another contribution in a narrow stream of contributions aiming at discovering/revealing ESC manifestations in systems biology and nature. Hence, we dedicate section \ref{sec:discussion} to recall and discuss some other works that have used ESC methods to study biological or natural phenomena.

\section{Problem Formulation of Dynamic Soaring as in literature}\label{sec:two_prob_form}
In this section, for the convenience of the reader who may not be familiar with this problem, we briefly summarize the problem formulation, models, and bounds/constraints necessary for the implementation of DS as generally done in the decades-long literature of the problem \cite{mir2018review}: nonlinear, non-real-time, model-dependent, heavily constrained optimal control problem. Recall from earlier discussions in sections \ref{sec:motivation} and \ref{sec:contribution}, this is how, for example, the numerical optimizer we use for comparison in section \ref{sec:simulations}. operates, \textit{however, this section is not needed for the novel control-affine ESC implementation} which may operate in a model-free basis without any bounds/constraints. 

\subsection{Modeling of Wind shear} Wind shear is the difference in wind velocity over a short region in the atmosphere. Wind shear is essential for energy harvesting via DS \cite{mir2018review,MITbousquet2017optimal,wilson1975sweeping}. Hence, it is crucial to have an accurate wind shear model for numerical-based optimization and optimal control methods. For this study, we consider vertical wind shear in which the wind velocity changes with the altitude. The wind velocity at sea level is virtually zero and it increases with the increase in altitude. To model this variation, many models have been used in the literature on optimal control of DS \cite{mir2018review}. Here, we consider a logistic function similar to \cite{MITbousquet2017optimal} and provided by \Cref{eqn:wind}:
\begin{equation}\label{eqn:wind}
    W(z)=\frac{W_0}{1+e^{-z/\delta}}.
\end{equation}
This model is parameterized by free stream wind speed $W_0$, and shear layer thickness $\delta$ as depicted in \Cref{fig:sigmoidWind} (left).
\begin{figure}[ht]
    \centering
    \includegraphics[width=1\textwidth]{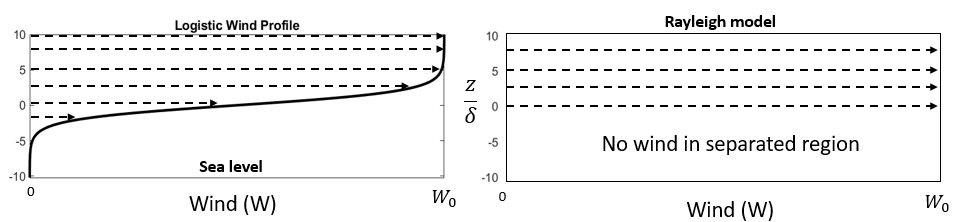}
    \caption{An example of a logistic wind profile with sea level at $z=-10$ (left) and Rayleigh discontinuous wind model (right).}
    \label{fig:sigmoidWind}
\end{figure}
It captures the features of any flow with a typical wind inhomogeneity $W_0$ developing over a length $\delta$ \cite{MITbousquet2017optimal}. For example, if the thickness of shear layer $\delta$ is very thin, this model approaches Rayleigh's discontinuous wind model as shown in \Cref{fig:sigmoidWind} (right).

\subsection{Flight dynamics model}
In the model-dependent optimal control literature of DS, three or six degrees of freedom models can be used. However, as noted in \cite{mir2018review}, six degrees of freedom models (as in \cite{6DOFakhtar2012positioning}) do not necessarily increase the accuracy as much, but increase the computational complexity/expense. Thus, we consider here a three-dimensional point-mass model with no thrust component to represent an albatross or a mimicking system model as follows \cite{MITbousquet2017optimal}:
\begin{equation}\label{eqn:dynamics}
    \begin{split}
        \dot{x}&=V \cos\gamma \cos\psi,\\
        \dot{y}&=V \cos\gamma \sin\psi-W,\\
        \dot{z}&= V \sin\gamma,\\
        m \dot{V}&= -D -mg\sin\gamma+m\dot{W} \cos\gamma \sin\psi,\\
        mV\dot{\gamma} &= L \cos \phi - mg \cos \gamma - m \dot{W} \sin \gamma \sin \psi, \\
        mV \dot{\psi }\cos \gamma  &= L \sin \phi + m \dot{W} \cos \psi. \\
    \end{split}
\end{equation}
Here, similar to \cite{MITbousquet2017optimal}, the orientation follows East, North and Up frame of reference, i.e. $(i,j,k)=(e_{East}, e_{North}, e_{Up})$ as shown in \Cref{fig:Angles}. Here, $\phi$ is the roll angle, $\psi$ is the heading angle, measured between $i$ and projection of velocity ($V$) in the $ij$-plane, and $\gamma$ is the flight path angle, measured between $V$ and $ij$-plane with nose up as positive. We assume the wind has density ($\rho$), velocity ($W$), and shear gradient ($\dot{W}$). At all times, the wind is assumed to blow from North to South resulting in the existence of only horizontal wind components. Moreover, the angles and speed are represented in the relative reference frame of the wind, whereas the position $(x,y,z)$ is represented in a fixed frame of the Earth. Lift (L) and drag (D) are represented as follows.
\begin{equation}\label{eqn:DS_general_system}
    \begin{split}
        L&= \frac{1}{2}\rho V^2 S C_L,\\
        D&= \frac{1}{2}\rho V^2 S C_D,
    \end{split}
\end{equation}
where the parabolic drag coefficient is given by $C_D = C_{D0} + KC_L^2$; $C_L$ is the lift coefficient, $C_D$ is the drag coefficient, $C_{D0}$ is zero-lift drag coefficient, and $K$ is the coefficient of induced drag. \Cref{tab:params} provides system's parameters as given in \cite{MITbousquet2017optimal} and references therein.
 \begin{table}[h]
\centering
\resizebox{\textwidth}{!}{
    \begin{tabular}{|>{\arraybackslash}p{1.8cm}|>{\arraybackslash}p{1cm}|>{\arraybackslash}p{1.5cm}|>{\arraybackslash}p{1cm}|>{\arraybackslash}p{1cm}|>{\arraybackslash}p{1cm}|>{\arraybackslash}p{1cm}|>{\arraybackslash}p{1cm}|>{\arraybackslash}p{0.5cm}|}
\hline
Parameter & mass (m) &wing area (S) & $C_{D0}$& K &  $\rho$ &  g & $W_0$ & $\delta$  \\
\hline
 Value & 9.5 kg & 0.65 $m^2$  &0.01   &0.0156   & 1.2 $kg/m^3$  &9.8 $m/s^2$ &7.8 m/s  & 7m\\
\hline
\end{tabular}
}
\caption{Characteristics of the albatross used in this study.}
\label{tab:params}
\end{table}


\begin{figure}
    \centering
    \includegraphics[width=0.8\textwidth]{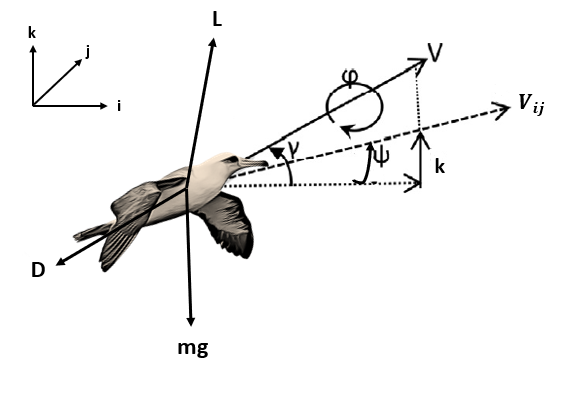}
    \caption{Aerodynamic forces acting on an albatross and the aerodynamic angles.}
    \label{fig:Angles}
\end{figure}

\subsection{Optimal control problem formulation}

Let the state vector be $\bm{x}(t)$ and the control vector be $\bm{u}(t)$ such that:
\begin{equation}\label{eqn:probForm}
\begin{split}
    \bm{x}(t)&=[x,y,z,V,\gamma,\psi],\\
    \bm{u}(t)&=[C_L,\phi].
\end{split}
\end{equation}
The flight dynamic system in \Cref{eqn:dynamics} with the wind model in \Cref{eqn:wind} can be written in the nonlinear system form:
\begin{equation}
    \bm{\dot{x}}(t)=\bm{f}(\bm{x}(t),\bm{u}(t)).
\end{equation}
The objective function we aim at maximizing (called performance index at times) - is denoted as $J=g(\bm{x};\dot{W})$. It is a general function of the state vector and the wind shear/gradient, and is subject to the path and boundary constraints. There are many possible objective function candidates for the DS problem as shown in \cite{mir2018review}, however, we only provide here the objective function that is relevant to this work (maximum of energy gain) as follows:
\begin{equation}\label{eqn:objective}
    \begin{split}
        J&=\max(Energy)_{gain},\\
    \end{split}
\end{equation}
The state of the bird at the end of the cycle depends on the mode of DS like basic, loiter and travel modes (see \cite{mir2018review} for more details on modes of DS). So, boundary constraints also vary with different modes of DS. In this work, we use the basic mode and its corresponding boundary constraints:
\begin{equation}\label{eqn:constraints1}
    [z,V,\gamma,\psi]_{t_f}^T=[z+\Delta z, V,\gamma,\psi]_{t_0}^T,
\end{equation}
where $t_f$ and $t_0$ refer to final and initial times respectively and $\Delta z$ is the net change in altitude after the DS cycle. Similarly, path constraints for control and states during the DS cycle are given in \Cref{eqn:constraints2}:
\begin{equation}\label{eqn:constraints2}
    \begin{split}
        V_{\min}<V<V_{\max}, \psi_{\min}<\psi<\psi_{\max},\\
        \gamma_{\min}<\gamma<\gamma_{\max}, x_{\min}<x<x_{\max},\\
        y_{\min}<y<y_{\max},z_{\min}<z<z_{\max} ,\\
        \phi_{\min}<\phi<\phi_{\max}, C_{L_{\min}}<C_L<C_L{_{\max}}.\\
    \end{split}
\end{equation}

\section{Geometric Control Formulation and Analysis of Dynamic Soaring}\label{sec:three_gct}
\subsection{Overview of nonlinear geometric controllability of control-affine systems}\label{sec:gct_ca}

Control-affine systems are defined as those linear or nonlinear systems in which the control inputs appear linearly as in \Cref{eqn:controlAffine}:
\begin{equation}\label{eqn:controlAffine}
    \dot{\bm{x}}=\bm{f}(\bm{x})+\sum_i^m u_i(t) \bm{b}_i(\bm{x}),
\end{equation}
where $\bm{x}\in\mathbb{R}^n$ is the state space vector evolving on an n-dimensional, smooth Manifold $\mathbb{M}^n$, $\bm{f}$ is the uncontrolled vector field of the system (known as the drift vector field), and $\bm{b}_i(\bm{x})$ are the control vector fields associated with $u_i(t)$, the control inputs. Control-affine forms are essential for the application of the GCT \cite{bullo2019geometric,lewis1997configuration,sussmann1987general,lafferriere1991motion} which helps to analyze the controllability, as well as motion planning, of under-actuated nonlinear control systems. Here we focus on controllability, which is one of the most important properties of mathematical and applied control theory. It is usually defined as the ability to steer a dynamical system from a given initial point to a given final point in a finite time, given a number of control inputs. For an under-actuated system (the number of controls is less than the degrees of freedom), this property, practically speaking, determines if a given control system can operate properly based on the available controls. In other words, can we steer a given system with the given controls in any direction we want in its state space? For linear systems of the form,
\begin{equation}\label{eqn:linear}
    \dot{\bm{x}}(t) = \bm{Ax}(t)+\bm{Bu}(t),
\end{equation}
where $\bm{x} \in \mathbb{R}^n $ is the state vector, $\bm{u} \in \mathbb{R}^m$ is the control input vector, $\bm{A} \in \mathbb{R}^{n \times n}$ is the state matrix, and $\bm{B} \in \mathbb{R}^{n \times m} $ is the input matrix. A necessary and sufficient condition for the controllability of the system given by \Cref{eqn:linear} is that the matrix
\begin{equation} \label{eqn:kalmanrc}
    \bm{C} = [\bm{B,AB,A^2B,...,A^{n-1}B}] \in \mathbb{R}^{n \times nm}.
\end{equation}
has full rank, i.e., rank($\bm{C}) = n$ \cite{kalman1963controllability}. This condition, known as Kalman controllability rank condition \cite{kalman1963controllability,nijmeijer1990nonlineardynamical}, is an easy-to-implement condition for linear systems.
%
%
Now, for the controllability of nonlinear systems given by \Cref{eqn:controlAffine}, linearization can be done to have systems in the form of \Cref{eqn:linear} and that can determine local controllability about an equilibrium point by applying Kalman’s criteria in \Cref{eqn:kalmanrc} to the linearized system \cite{nijmeijer1990nonlineardynamical}. However, linearization only offers a sufficient condition for controllability. That is, the actual nonlinear system can be controllable even if the linearized system is not, as shown in the following example.

\textbf{Example 1.} Let us consider a basic ground robot that functions like a baby stroller with $(x_1,x_2) \in \mathbb{R}^2$ as the center of mass and $\theta \in \mathbb{S}^1$ as the orientation angle as shown in \Cref{fig:singleVehicle}. We take an initial position of $\bm{x}_0 = (0,0,0)$. The control inputs for the ground robot are $u_1$ (velocity) and $u_2$ (angular velocity).
%
\begin{wrapfigure}{r}{0.3\textwidth}
  \begin{center}
    \includegraphics[width=0.3\textwidth]{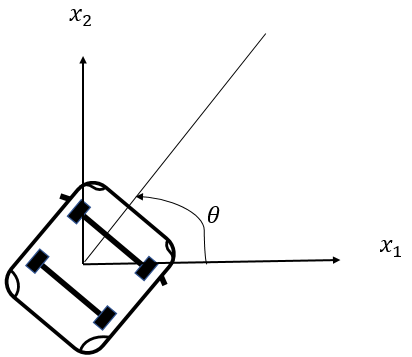}
  \end{center}
  \caption{A basic ground robot.}
  \label{fig:singleVehicle}
\end{wrapfigure}
 This allows for a linear motion $\dot{x}_1 = u_1 cos \theta$ and $\dot{x}_2 = u_1 sin \theta $, and a rotational motion $\dot{\theta} = u_2$. Thus, the system can be formulated as in \Cref{eqn:controlAffine} as
 \begin{equation}\label{eqn:paralleleqn_states}
     \dot{\bm{x}} = \bm{b}_1\bm{u}_1+\bm{b}_2\bm{u}_2, \: \text{with}
 \end{equation}
\begin{align}
    \bm{x}=\begin{bmatrix}
    x_1\\
    x_2\\
    \theta
    \end{bmatrix},
    \bm{b}_1 = \begin{bmatrix}
    cos \theta\\
    sin \theta\\
    0
    \end{bmatrix},
    \bm{b}_2 = \begin{bmatrix}
    0\\
    0\\
    1
    \end{bmatrix}.
\end{align}
This system has 2 control inputs and 3 degrees of freedom, so it is clearly underactuated. Now, the controllability matrix, given by \Cref{eqn:kalmanrc} will have only two vectors resulting from linearization and clearly will not have full rank at $\bm{x}_0$. Thus, it does not satisfy Kalman's rank condition and is linearly uncontrollable. However, from experience, we know the system is controllable. We know also that we cannot ``directly" generate a side motion, which is the cause of linear uncontrollability. However, if one generates certain variations along the available vector fields of the available inputs (linear and angular velocities), we can generate new directions similar to what we experience in parallel parking. This invokes the need for GCT.

The observation provided above in Example 1 is why in literature, nonlinear controllability is mostly associated with GCT.
The literature on nonlinear controllability of nonlinear systems using GCT involves many terminologies and definitions of some weaker and stronger forms of controllability notions, such as accessibility and small-time local controllability (STLC). In addition to these notions, which are typically used to characterize controllability from a fixed point, the concept of local controllability (LC) is used within GCT to characterize controllability along a non-stationary reference trajectory, which will be of particular relevance in this work in section \ref{sec:gct_ds}. For the control-affine system in \Cref{eqn:controlAffine}, we provide formal definitions of these notions \cite[pp. 371]{bullo2004geometric} in Definition 1 that hold throughout this paper. The concepts we provide below in Definition 1 are visually depicted in \Cref{fig:controllability} as a two-dimensional figure.

\textbf{Definition 1 (controllability notions)}: Let the triplet $\Sigma = (\mathbb{M}^n, \mathbb{F},U)$ represent the control affine system in \Cref{eqn:controlAffine}, where $\mathbb{M}^n$ is a smooth manifold, $\mathbb{F} = \{\bm{f},\bm{b_1},...\bm{b_m} \}$ the acting vector fields, and $U$ is the set of admissible controls, which is a function space such that for every $u\in U$, it holds that $u: I \rightarrow U $ for $I \subset \mathbb{R}$ and is locally integrable and bounded. Moreover, the convex hull of $U$ contains the zero vector, $\bm{0_m} \in int(conv(U))$. For a point $\bm{x}_0 \in \mathbb{M}^n$,
\begin{enumerate}[label=D1.\arabic*]
    \item For $T>0, R_{\Sigma} (\bm{x}_0,T)$ denotes the set of points reachable from $\bm{x}_0$ by controlled trajectories of $\Sigma$ in exactly time $T$, i.e., $R_{\Sigma}(\bm{x}_0,T) = \{ \bm{x}(T)\:|\:\bm{x}(t) $ satisfies \Cref{eqn:controlAffine} for all $t \in [0,T]$ with some $\bm{u} \in U$ and $\bm{x}(0)=\bm{x}_0$ \}.
   
    \item  $R_{\Sigma} (\bm{x}_0,\le T)$ denotes the set of points reachable from $\bm{x}_0$ in at most time $T$, i.e.,  $R_{\Sigma} (\bm{x}_0,\le T) = \cup_{t\in[0,T]}  R_{\Sigma} (\bm{x}_0,T)$.
   
    \item The control-affine system $\Sigma$ is said to be accessible from $\bm{x}_0$ if for all $T>0, int(R_{\Sigma}(\bm{x}_0,\le T)) \ne \phi$.
   
    \item The control affine system $\Sigma$ is said to be small-time locally controllable (STLC) from $\bm{x}_0$ if for all $T>0, \bm{x}_0 \in  int(R_{\Sigma}(\bm{x}_0,\le T))$.

    \item The control affine system $\Sigma$ is said to be locally-controllable (LC) about a reference trajectory $\bm{x}^*$ (satisfying \Cref{eqn:controlAffine}) with $\bm{u}=0$ and $\bm{x}^*(0)=\bm{x}_0$) if for some $T>0, \bm{x}^*(t) \in  int(R_{\Sigma}(\bm{x}_0,\le T))$.
\end{enumerate}

\begin{remark} (\cite{bianchini1990graded}, Remark 1.1)
    In the case that the reference trajectory is stationary (equilibrium), then LC about the reference trajectory reduces to STLC.
\end{remark}
\begin{figure}[ht]
    \centering
    \includegraphics[width=0.75\textwidth]{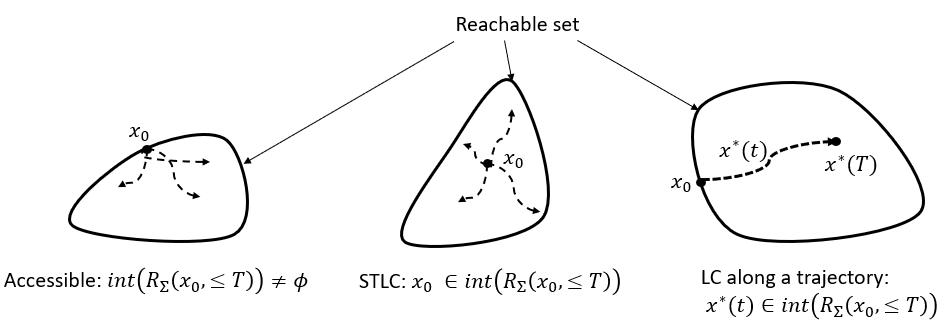}
    \caption{Accessibility, STLC, and LC along a trajectory with relation to the reachable sets.}
    \label{fig:controllability}
\end{figure}

Accessibility is one of the most important and essential properties as it is usually a part of most controllability results or conditions like those in Definition 1. Below is how accessibility is often characterized in GCT \cite{sussmann1972controllability}.
\begin{theorem}\label{thm:larc}
The nonlinear system $\Sigma$, if real analytic, is locally accessible at $\bm{x}_0 \in \mathbb{M}^n$ if and only if the accessibility distribution in \Cref{eqn:larc} has rank $n$.
\begin{equation}\label{eqn:larc}
    \bm{\Delta}_{\bm{x}_0} = span \{ \bm{f}, \bm{b_1},..., \bm{b_m}, [\bm{b}_i,\bm{b}_j],...,ad^k_{\bm{b}_i}\bm{b}_j,...,[\bm{f},\bm{b}_i],...ad^k_{\bm{f}}\bm{b}_j \}_{\bm{x} = \bm{x}_0},
\end{equation}
where the bracket $[.,.]$ is the Lie bracket operation defined below
\begin{equation}
    [\bm{b}_i,\bm{b}_j]:=\frac{\partial \bm{b}_j}{\partial \bm{x}}\bm{b}_i-\frac{\partial \bm{b}_i}{\partial \bm{x}}\bm{b}_j,
\end{equation}
and $ad^k_{\bm{f}}\bm{b}_j = [\bm{f},ad^{k-1}_f \bm{b}_j]$ and $ad^1_{\bm{f}} \bm{b}=[\bm{f},\bm{b}]$. The accessibility condition defined in \Cref{eqn:larc} is called and referred to by the Lie Algebraic Rank Condition (LARC).
\end{theorem}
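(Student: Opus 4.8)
The plan is to prove both implications through the \emph{orbit} of the system at $\bm{x}_0$ and its relationship to the reachable sets of Definition 1. First I would introduce the accessibility Lie algebra $\mathcal{L}$, the smallest Lie subalgebra of analytic vector fields on $\mathbb{M}^n$ containing $\{\bm{f},\bm{b}_1,\dots,\bm{b}_m\}$, and observe that, as a vector space, $\mathcal{L}$ is spanned by the iterated brackets $[\bm{b}_i,\bm{b}_j],\dots,ad^k_{\bm{b}_i}\bm{b}_j,\dots,[\bm{f},\bm{b}_i],\dots,ad^k_{\bm{f}}\bm{b}_j$; hence the distribution in \Cref{eqn:larc} is precisely $\bm{\Delta}_{\bm{x}}=\{X(\bm{x}) : X\in\mathcal{L}\}$. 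Let $\mathcal{O}(\bm{x}_0)$ denote the orbit of $\bm{x}_0$, i.e.\ the set of points reachable from $\bm{x}_0$ by arbitrary finite concatenations of \emph{forward and backward} flows of the vector fields in $\mathbb{F}$. By the orbit theorem (Sussmann), $\mathcal{O}(\bm{x}_0)$ is a connected immersed submanifold of $\mathbb{M}^n$; and since all data are real analytic, Nagano's theorem identifies its tangent space exactly, $T_{\bm{x}}\mathcal{O}(\bm{x}_0)=\bm{\Delta}_{\bm{x}}$ for all $\bm{x}\in\mathcal{O}(\bm{x}_0)$. In particular $\dim\mathcal{O}(\bm{x}_0)=\operatorname{rank}\bm{\Delta}_{\bm{x}_0}$.

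For necessity I would argue by contraposition. Every trajectory of \Cref{eqn:controlAffine} used to define $R_\Sigma(\bm{x}_0,\le T)$ runs in forward time only, so $R_\Sigma(\bm{x}_0,\le T)\subseteq\mathcal{O}(\bm{x}_0)$ for all $T>0$. If $\operatorname{rank}\bm{\Delta}_{\bm{x}_0}=k<n$, then $\mathcal{O}(\bm{x}_0)$ is a $k$-dimensional immersed submanifold and therefore has empty interior in $\mathbb{M}^n$; hence $\operatorname{int}\!\big(R_\Sigma(\bm{x}_0,\le T)\big)=\emptyset$ for every $T>0$, i.e.\ $\Sigma$ is not accessible from $\bm{x}_0$.

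For sufficiency, suppose $\operatorname{rank}\bm{\Delta}_{\bm{x}_0}=n$. Then (using real analyticity a second time, to propagate the rank) $\operatorname{rank}\bm{\Delta}_{\bm{x}}=n$ on a neighborhood of $\bm{x}_0$ and $\mathcal{O}(\bm{x}_0)$ is open in $\mathbb{M}^n$; it remains to show the \emph{forward} set $R_\Sigma(\bm{x}_0,\le T)$ has nonempty interior. The direct route is constructive: pick $g_1,\dots,g_n\in\mathcal{L}$ linearly independent at $\bm{x}_0$ and show each local flow $\Phi^{g_i}_s$, including those $g_i$ arising as iterated brackets, is realizable to first order in positive time by admissible forward trajectories of $\Sigma$. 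The control vector fields $\bm{b}_i$ are themselves admissible because $\bm{0}_m\in\operatorname{int}(\operatorname{conv}U)$ lets us use $\bm{u}=\pm e_i$; brackets $[\bm{b}_i,\bm{b}_j]$ are produced by the classical commutator maneuver $\Phi^{-\sqrt{t}\bm{b}_j}\!\circ\Phi^{-\sqrt{t}\bm{b}_i}\!\circ\Phi^{\sqrt{t}\bm{b}_j}\!\circ\Phi^{\sqrt{t}\bm{b}_i}$, which equals the time-$t$ flow of $[\bm{b}_i,\bm{b}_j]$ up to $o(t)$ by the Baker--Campbell--Hausdorff expansion; brackets involving $\bm{f}$ are obtained analogously by interleaving short drift arcs. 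Then the endpoint map $(t_1,\dots,t_n)\mapsto\Phi^{g_n}_{t_n}\!\circ\cdots\circ\Phi^{g_1}_{t_1}(\bm{x}_0)$ has full-rank Jacobian at the origin, so by the inverse function theorem its image contains an open set, and a continuity/limiting argument transfers this to $R_\Sigma(\bm{x}_0,\le T)$.

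The main obstacle is exactly this last direction: making rigorous that bracket directions — especially $ad^k_{\bm{f}}\bm{b}_j$, which mix the \emph{irreversible} drift with control fields — are achievable by admissible forward-time trajectories within an arbitrarily short horizon. The cleanest way to sidestep the delicate bookkeeping is to work with $R_\Sigma(\bm{x}_0,\le T)$ and invoke the standard fact (Krener) that for analytic systems $\operatorname{int}\!\big(R_\Sigma(\bm{x}_0,\le T)\big)\neq\emptyset$ iff $R_\Sigma(\bm{x}_0,\le T)$ is not contained in any proper analytic submanifold of $\mathbb{M}^n$; combining this with the Nagano identification finishes the argument, since if $R_\Sigma(\bm{x}_0,\le T)\subseteq N$ for a proper analytic submanifold $N$, then all fields of $\mathbb{F}$ — hence all of $\mathcal{L}$ — are tangent to $N$ along the reachable set, forcing $\operatorname{rank}\bm{\Delta}_{\bm{x}_0}\le\dim N<n$, a contradiction.
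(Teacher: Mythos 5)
The paper does not actually prove \Cref{thm:larc}: it is quoted as a classical background result of geometric control (attributed to the Sussmann--Jurdjevic/Krener line of work), so there is no in-paper argument to compare yours against, and your proposal has to stand on its own as a reconstruction of the standard proof. Its skeleton is the textbook one, and your necessity half is essentially right: reachable sets lie in the orbit, the orbit theorem plus Nagano's analytic refinement identifies $T_{\bm{x}}\mathcal{O}(\bm{x}_0)$ with $\bm{\Delta}_{\bm{x}}$, and a positive-codimension immersed submanifold has empty interior in $\mathbb{M}^n$. The one step you should not gloss there is the inclusion $R_\Sigma(\bm{x}_0,\le T)\subseteq\mathcal{O}(\bm{x}_0)$ itself: trajectories of \Cref{eqn:controlAffine} are integral curves of the time-varying fields $\bm{f}+\sum_i u_i(t)\bm{b}_i$ with merely measurable $u$, not concatenations of flows of $\bm{f}$ and the $\bm{b}_i$, so you need the (standard) tangency/invariance argument that these fields are tangent to the orbit and therefore leave it invariant, together with the observation that $\operatorname{Lie}\{\bm{f}+\sum_i u_i\bm{b}_i:\bm{u}\in U\}=\operatorname{Lie}\{\bm{f},\bm{b}_1,\dots,\bm{b}_m\}$ because $\bm{0}_m\in \operatorname{int}(\operatorname{conv}(U))$ makes $U$ affinely spanning.

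The genuine gap is in the sufficiency half, and you half-identified it yourself. The commutator/BCH endpoint-map sketch cannot be completed as stated, since directions like $ad^k_{\bm{f}}\bm{b}_j$ would require reversing the drift, which forward-time admissible trajectories cannot do; and the proposed ``sidestep'' is circular: the asserted equivalence ``$\operatorname{int}(R_\Sigma(\bm{x}_0,\le T))\ne\emptyset$ iff $R_\Sigma(\bm{x}_0,\le T)$ is not contained in a proper analytic submanifold'' is not an independent standard fact (a set with empty interior need not lie in any proper submanifold), and for reachable sets its nontrivial direction is exactly Krener's theorem, i.e.\ the statement you are trying to prove. The fix is either to cite Krener's theorem outright (full rank of $\bm{\Delta}_{\bm{x}_0}$ implies $\operatorname{int}(R_\Sigma(\bm{x}_0,\le T))\ne\emptyset$ for every $T>0$), or to reproduce its nested-submanifold, forward-time construction: flow one admissible constant-control field from $\bm{x}_0$ for a short positive time to get a one-dimensional embedded arc of reachable points; if every admissible field were tangent to the submanifold built so far along its reachable points, then so would be all brackets of $\bm{f},\bm{b}_1,\dots,\bm{b}_m$, contradicting $\operatorname{rank}\bm{\Delta}_{\bm{x}_0}=n$; hence some admissible field is transversal at a reachable point, and flowing it for a short positive time raises the dimension by one; after at most $n$ steps one obtains an open set of points reachable in total time $\le T$. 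That argument works purely in forward time, requires no realization of bracket directions, and (unlike the necessity half) does not even need analyticity.
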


\textbf{Example 1 (contd.)} Now, we continue Example 1 and analyze the nonlinear controllability using GCT. First, we determine if the accessibility condition in \Cref{eqn:larc} will be satisfied for the system in Example 1, especially since we know from experience that it is controllable and linear controllability has failed due to the inability to steer the system directly in side motion. We calculate the Lie bracket:
\begin{align}\label{eqn:paralleleqn_lie}
[\bm{b}_1,\bm{b}_2]= [\sin \theta, \cos \theta, 0]^T.
\end{align}
Then, $\bm{\Delta_x} = span\{\bm{b_1,b_2},[\bm{b_1,b_2}]\}$ has a full rank of $n=3$ at $\bm{x}_0$, hence the system in  \Cref{eqn:paralleleqn_states} is accessible. Now, for a driftless system, accessibility and controllability are equivalent concepts \cite{bullo2019geometric}. This implies that the system in \Cref{eqn:paralleleqn_states} is controllable. Moreover, the Lie bracket $[\bm{b}_1,\bm{b}_2]$ generates motion perpendicular to the orientation of the robot in which we did not have direct control authority over, and why linear control theory has failed in the first place, see \Cref{fig:multiVehicle}. Simply substituting $\theta=0$ in \Cref{eqn:paralleleqn_lie} shows the side motion direction immediately. In fact, we generate a similar motion (trying to realize the Lie bracket) when we perform parallel parking.
\begin{figure}[ht]
    \centering
    \includegraphics[width=0.8\textwidth]{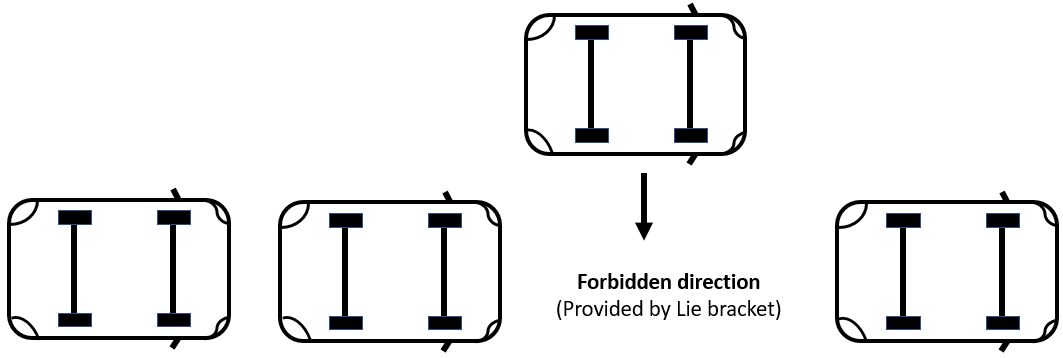}
    \caption{Lie bracket between velocity and angular velocity vector fields generate motion in the forbidden direction.}
    \label{fig:multiVehicle}
\end{figure}
Thus, GCT provides tools to analyze the controllability of systems that are linearly uncontrollable, however, said systems may be nonlinearly controllable.

For systems that have drifts, controllability is not equal to accessibility. That is, even if the system is accessible, it might not be controllable. The main difference between accessibility and controllability is that we may be able to generate motion 
\begin{wrapfigure}{r}{0.4\textwidth}
    \centering
    \includegraphics[width=0.4\textwidth]{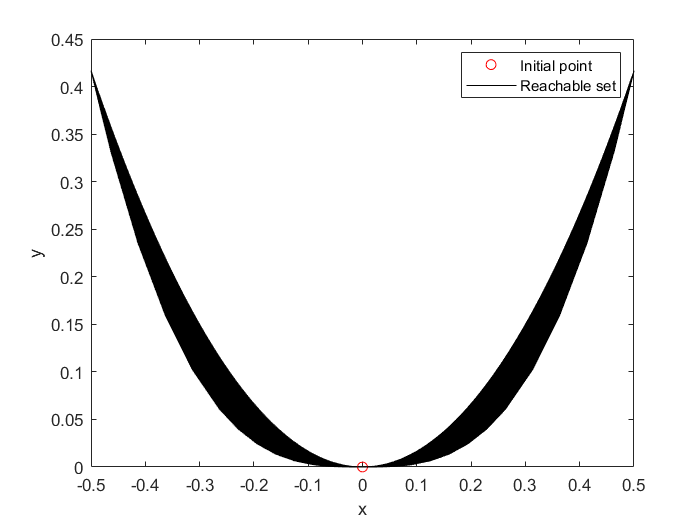}
    \caption{Reachable sets for system in \Cref{eqn:eg2} with $-0.1\le u= constant \le 0.1 $ and $0 \le t \le 5$.}
    \label{fig:reachable_sets_eg2}
\end{wrapfigure}along some direction, but cannot reverse motion along that same direction resulting in the initial point being on the boundary of the reachable set and not in its interior; this is noted from Definition 1 and \Cref{fig:controllability}. This obstruction to local controllability 
is associated with Lie brackets involving an even number of the same control vector $\bm{b}_i$ and an odd number of the drift vector field $\bm{f}$. These Lie brackets are referred to as ``bad brackets" in GCT literature and they are responsible for the said obstruction. 
Because of them, we lack the ability to reverse the flow along the bracket direction by changing the sign of the corresponding control input $u_i$. Given the relevance of the above-mentioned obstruction concept to the results of this paper, we use the following example to explain further the obstruction concept due to ``bad brackets."

\textbf{Example 2: }
In this example, we show that accessibility is not equal to controllability and some Lie brackets called ``bad brackets" obstruct the local controllability. Let us consider a system represented by \Cref{eqn:eg2}:
\begin{align}\label{eqn:eg2}
    \begin{bmatrix}
    \dot{x}\\
    \dot{y}
    \end{bmatrix}=
    \begin{bmatrix}
    u\\
    x^2
    \end{bmatrix}=
    \begin{bmatrix}
    0\\
    x^2
    \end{bmatrix}+
    \begin{bmatrix}
    1\\
    0
    \end{bmatrix} = \bm{f}+\bm{b}u.
\end{align}
Now, to test the accessibility of the system given by \Cref{eqn:eg2} from $\bm{x}_0=(0,0)$, we construct the LARC as in \Cref{eqn:larc},
\begin{align}
    &\bm{b(0)} = \begin{bmatrix}
    0\\
    1
    \end{bmatrix},
    [\bm{f,b}](\bm{0}) = \begin{bmatrix}
    0\\
    -2x
    \end{bmatrix}=\bm{0},
    [[\bm{f,b}],\bm{b}](\bm{0}) = 
    \begin{bmatrix}
    0\\
    2
    \end{bmatrix},\\
    &\bm{\Delta}_{\bm{x}_0} = span \{ \bm{b(0)},[[\bm{f,b}],\bm{b}](\bm{0})  \}.\label{eqn:larc_eg2}
\end{align}

The accessibility distribution in \Cref{eqn:larc_eg2} has a rank of 2, thus, the system is accessible from $\bm{x}_0$. However, the system is not controllable. It can be easily observed that the state $x$ can be directly controlled by the control input $u$; one can reach positive and negative points by changing the control sign. However, the dynamics of the state $y$ is a function of $x^2$ and can only have positive reachable points as demonstrated in \Cref{fig:reachable_sets_eg2} for $-0.1\le u=constant \le 0.1 $ and $0 \le t \le 5$. Thus, the motion in the $y$ direction cannot be reversed and the system is not controllable, but accessible as the origin is on the boundary of the nonempty reachable set. Now, to analyze the concept of obstruction to local controllability within the context of this example, we expand \Cref{eqn:eg2} using the Chen-Fliess series (provided in \Cref{sec:chen}) and analyze it.
Using Chen-Fliess expansion for $y$, we get the exact representation
\begin{equation}\label{eqn:eg2_fliess}
    y(t)= y_0 +x_0^2 t + 2 x_0 \int_0^t \int_0^s u(\sigma) d\sigma ds + 2 \int_0^t \int_0^s u(\sigma) \int_0^{\sigma} u(\tau) d\tau d\sigma ds.
\end{equation}
Given that $\bm{x}_0=(x_0 =0,y_0 = 0)$, $y(t)$ is equal to the last term in \Cref{eqn:eg2_fliess}. No matter what the sign of $u$ is, the state $y$ can only reach points larger than or equal to $y_0=0$. This term remaining in \Cref{eqn:eg2_fliess}  is associated with the Lie bracket $[[\bm{f,b}],\bm{b}]$. Thus, the use of such Lie brackets in LARC causes obstruction to the local controllability, hence the ``bad brackets" terminology.

However, Sussman \cite{sussmann1972controllability} and then Bianchini and Stefani \cite{bianchini1990graded} showed that ``bad brackets" can be, in fact, neutralized in some cases. Their work -- for stationary (fixed point) in the case of Sussman \cite{sussmann1972controllability} and non-stationary (reference trajectory) in the case of Bianchini and Stefani \cite{bianchini1990graded} -- provide a sufficient condition based on a process by which one can determine controllability about a fixed point or reference trajectory, respectively. It comes down to the fact that ``not all bad brackets are the same" and not all of them can preclude controllability. That is, if bad brackets are neutralizable by being linear combinations of some other good brackets, then the system can still be controllable. However, it is not easy to do such a process, and in fact, it has been rarely applied in the literature to real-life applications or major problems, especially \textit{biological systems.} If we recall Example 2 again, we notice that only brackets that include an odd number of $\bm{f}$ and an even number of every $\bm{b}$ are available and necessary for spanning the LARC distribution; this is why the system in Example 2 is only accessible and have to rely on the direction this bad bracket is providing. Nevertheless, this may not be the case in many other systems where bad brackets are not vanishing. For instance, one can find some other good brackets spanning the directions of the identified bad brackets. It turned out, however, that it is not enough to have the bad brackets replaced by a linear combination of good brackets. One also has to show that the alternative good brackets are more dominant. Since bracket length depends on the differentiation order, the more length the bracket has, the less dominant it is. Sussmann’s theorem \cite{sussmann1987general} provides some freedom in defining the length of a bracket and consequently a process for neutralizing bad brackets by other good brackets of smaller length (or weighted length). For the convenience of the reader and the analysis provided in this paper, a simplified (easy to apply) summary of relevant theorems \cite{sussmann1987general,bianchini1990graded} is presented next. However, we provide some basic definitions and notions that allow the application of such theorems. We gathered them in Definition 2 below based on the materials from \cite{sussmann1972controllability,bianchini1990graded,bullo2004geometric}.

\textbf{Definition 2: (notion and process of neutralization of bad brackets)}
\cite[chapter 7]{bullo2004geometric},\cite{sussmann1987general,stefani1985local}
\begin{enumerate}[label=D2.\arabic*]
    \item For a formal bracket $B$, let $|B|_0 $ and $|B|_a$, $a \in \{1,...,m\}$ denote the number of times that vector fields $\bm{f}$ and $\bm{b}_a$ appear in the formal bracket $B$.
   
    \item Let $\beta$ be the set of potential obstructions to STLC, i.e., $\beta$= \{$B: |B|_0$ is odd and $|B|_a $ is even $\forall a \in \{ 1,...,m\}$ \}.
   
    \item Let $\bm{w}=(w_0,w_1,...,w_m)$ be a non-negative weight for each formal bracket $B$ such that $\bm{w}- weight$ of $B$ is defined as $||B||_{\bm{w}} = \Sigma^m_{i=0}w_i|B_i|$.
   
    \item A $\bm{w}-homogeneous$ element $L \in Lie^\infty (\mathbb{F})$ at $\bm{x}_0$ is a linear combination of formal brackets evaluated at $\bm{x}_0$ having the same weight.
   
    \item  A $\bm{w}-homogeneous$ element $L \in Lie^\infty (\mathbb{F})$ is $\bm{w}-neutralized$ at $\bm{x}_0$ if it is a linear combination of formal brackets evaluated at $\bm{x}_0$, with strictly lower $\bm{w}-$ weight.
   
    \item If $S_m$ is the set of permutations on $m$ symbols, $\bm{\sigma} \in S_m$ acts on a weight $\bm{w}$ by acting on the last $m$ components and leaving the first component $w_0$ unchanged, i.e, $\bm{\sigma}(w_0,w_1,...,w_m) = (w_0,w_{\bm{\sigma}(1)}, ...,w_{\bm{\sigma}(m)})$. Then, for a weight $\bm{w}$,$\bm{\sigma} \in S_m^{\bm{w}}$ if $\bm{\sigma}(\bm{w}) = \bm{w}$.
   
    \item For a bracket $B$ and a permutation $\bm{\sigma}, \bm{\sigma} (B)$ is the bracket resulted from applying $\bm{\sigma}$ to each of the input vector fields $\bm{b}_i$'s and leaving the drift vector field unchanged.
   
    \item Define the set $\beta_s^{\bm{w}} = \{ L \in \beta: \bm{\sigma}(L) = L \; \forall \bm{\sigma} \in S_m^{\bm{w}} \}$, i.e., the set of elements that are symmetric with respect to the vector fields having the same weight. Then, the set of $\bm{w}-obstructions$ at $\bm{x}_0$ is defined as $\beta^{\bm{w}} = Lie^{\infty} (\bm{f},\beta_s^{\bm{w}}) \cap S,$ where $S= Lie^{\infty} \{ ad_f^k b_i | i\in \{ 1,...,m\}\}$.
   
    \item If there are no constraints on the size of the controls ($U=\mathbb{R}^m$), then any set of non-negative integers will be admissible as weights. But if $U$ is a hyper-cube, then $w_a \ge w_0 , a \in \{1,..., m \}$ for an admissible weight.

\end{enumerate}

Finally, we provide \Cref{thm:sussman} and \Cref{thm:bianchi} for analyzing the STLC and LC of a control-affine system in the form of \Cref{eqn:controlAffine}.
\begin{theorem}\label{thm:sussman}
(Sussmann's sufficient condition for STLC \cite{sussmann1987general}).\\
Let $\Sigma = (\mathbb{M}^n, \mathbb{F},U)$ be a control-affine system and let $\bm{x}_0 \in \mathbb{M}^n$ be a fixed point. If
\begin{enumerate}
    \item $\Sigma$ satisfies the LARC at $\bm{x}_0$ (\Cref{thm:larc}) and,
    \item there exists an admissible weight $\bm{w}$ such that each element in $\beta^w$ is $\bm{w}-neutralized$ at $\bm{x}_0$,
\end{enumerate}
then $\Sigma$ is STLC at $\bm{x}_0$.
\end{theorem}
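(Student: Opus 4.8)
The plan is to reconstruct Sussmann's argument \cite{sussmann1987general}: establish STLC by a weight-adapted ``blow-up'' of the endpoint map, together with a symmetrization that annihilates exactly the obstructing brackets identified in Definition 2. First I would fix $\bm{x}_0$ and, for a small parameter $\varepsilon>0$, introduce a finite-dimensional family of admissible controls $u^{\varepsilon}_{\bm p}$ parameterized by $\bm p$ in a compact neighborhood of the origin of some $\mathbb{R}^{N}$, scaling the amplitude of each $u_a$ like $\varepsilon^{w_a}$ and the time horizon like $\varepsilon^{w_0}$, so that the construction respects the admissible weight $\bm w$ (invoking D2.9 to keep amplitudes inside $U$ when $U$ is a hypercube). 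Let $E_{\varepsilon}(\bm p)$ denote the time-$\varepsilon^{w_0}$ endpoint of the trajectory of \Cref{eqn:controlAffine} from $\bm{x}_0$ under $u^{\varepsilon}_{\bm p}$; the goal is to show $E_{\varepsilon}$ covers a full neighborhood of $\bm{x}_0$ for all small $\varepsilon$.

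Next I would expand $E_{\varepsilon}(\bm p)$ by the Chen--Fliess series (\Cref{sec:chen}) as $E_{\varepsilon}(\bm p)=\bm{x}_0+\sum_{B} c_{B}(\bm p,\varepsilon)\,\bm B(\bm{x}_0)+(\text{remainder})$, the sum running over formal brackets $B$, with $\bm B(\bm{x}_0)$ the corresponding iterated Lie bracket at $\bm{x}_0$ and $c_B$ an iterated integral of the scaled controls whose order in $\varepsilon$ is $\|B\|_{\bm w}$. Grouping by total $\bm w$-weight turns this into $E_{\varepsilon}(\bm p)=\bm{x}_0+\sum_{j\ge 1}\varepsilon^{j}\Phi_{j}(\bm p)+o(\varepsilon^{j_{\max}})$, each $\Phi_{j}$ a finite sum of iterated-integral functionals of $\bm p$ multiplying weight-$j$ brackets evaluated at $\bm{x}_0$.

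The crux is the symmetrization. Averaging $E_{\varepsilon}$ over the finite group generated by the sign reversals $u_a\mapsto-u_a$ and by the permutations $\bm\sigma\in S^{\bm w}_m$ that fix $\bm w$ (D2.6--D2.7) kills every contribution of a formal bracket that is not symmetric under this group. What remains at each weight $j$ splits into two parts: brackets outside the obstruction class $\beta$ (D2.2), whose coefficients retain freely steerable sign via $\bm p$; and precisely the $\bm w$-obstructions $\beta^{\bm w}$ of D2.8. By the neutralization hypothesis of \Cref{thm:sussman}, every element of $\beta^{\bm w}$ is $\bm w$-neutralized at $\bm{x}_0$, i.e. equals a linear combination of brackets of strictly lower $\bm w$-weight (D2.5); substituting these identities removes all fixed-sign obstructions from the lowest surviving weight. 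Hence, at the lowest weight $d$ at which the bracket span already equals $T_{\bm{x}_0}\mathbb{M}^{n}$ — which exists by the LARC hypothesis of \Cref{thm:sussman} together with \Cref{thm:larc} — the leading symmetrized term $\Phi_{d}$ maps onto a neighborhood of $0\in\mathbb{R}^{n}$ with $0$ in its interior, the sign-free good brackets furnishing the remaining directions together with their negatives.

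Finally I would promote this leading-order surjectivity to the true endpoint map: since $E_{\varepsilon}(\bm p)=\bm{x}_0+\varepsilon^{d}\big(\Phi_{d}(\bm p)+O(\varepsilon)\big)$ uniformly on the compact parameter set, a degree-theoretic / Brouwer fixed-point argument — needed because $E_{\varepsilon}$ is only continuous in $\bm p$, not uniformly smooth — shows $E_{\varepsilon}$ hits a ball $B(\bm{x}_0,r\varepsilon^{d})$ for all small $\varepsilon$. As the controls are supported on $[0,\varepsilon^{w_0}]\subset[0,T]$ for any prescribed $T>0$ once $\varepsilon$ is small enough, this gives $\bm{x}_0\in int\big(R_{\Sigma}(\bm{x}_0,\le T)\big)$, i.e. STLC. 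I expect the main obstacle to be exactly the symmetrization/weight bookkeeping — proving that the group average annihilates all non-symmetric brackets and that the residual symmetric obstructions coincide with $\beta^{\bm w}$ — together with making the topological closing step rigorous despite the endpoint map's lack of uniform smoothness. An alternative that sidesteps part of the analysis is to prove STLC first for the $\bm w$-homogeneous nilpotent approximation of \Cref{eqn:controlAffine}, where the Chen--Fliess series truncates, and then transfer it to the original system by a homogeneity-plus-continuity perturbation argument, which is close to Sussmann's actual route.
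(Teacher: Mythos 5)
The paper never proves this statement: \Cref{thm:sussman} is imported verbatim from Sussmann \cite{sussmann1987general} (with the trajectory version taken from Bianchini--Stefani \cite{bianchini1990graded}) and is only \emph{applied}, in \Cref{thm:controllability}, to the proposed DS system. So there is no in-paper proof to compare yours against; what you have written is a reconstruction of Sussmann's original argument, and as a road map it does identify the right ingredients: weight-adapted scaling of the time horizon and control amplitudes, the Chen--Fliess expansion organized by $\bm{w}$-weight, an input-symmetry argument isolating the obstructions of Definition 2, the neutralization hypothesis to handle them, and a topological open-mapping step (or the nilpotent-approximation shortcut) to place $\bm{x}_0$ in the interior of the reachable set.

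As a proof, however, the central step is asserted rather than established. You cannot literally average the endpoint map $E_{\varepsilon}$ over the group generated by sign reversals and weight-preserving permutations: an average of endpoints of admissible trajectories is not itself the endpoint of an admissible trajectory, so ``what remains after averaging'' is not something you have shown to be reachable. In Sussmann's argument the symmetrization acts on the iterated-integral \emph{coefficients} of the Chen series (equivalently, on families of control variations), and one must prove (i) that the sign-reversal-invariant coefficients are exactly those attached to brackets with $|B|_a$ even for all $a$, so that the surviving symmetric terms coincide with $\beta^{\bm{w}}$; (ii) that the good brackets furnish a leading term whose image contains a neighborhood of the origin with reversible sign; and (iii) that the $\bm{w}$-neutralized obstructions, being expressible through strictly lower-weight brackets, are of higher order in $\varepsilon$ than the good directions that absorb them -- this is precisely where admissibility of the weight (your appeal to D2.9) and the uniform remainder estimate enter, and the degree/Brouwer closing step needs that uniformity in $\bm{p}$. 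None of this is a wrong turn -- it is essentially Sussmann's route, and the nilpotent-approximation alternative you mention is also legitimate -- but as written the symmetrization/domination bookkeeping is exactly the part you concede is missing, so the sketch does not yet constitute a self-contained proof of the cited theorem.
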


\begin{theorem}\label{thm:bianchi}
(Bianchini and Stefani \cite{bianchini1990graded}, Corollary 1.1 ).\\
The system $\Sigma$ is LC along a reference trajectory $\bm{x}^*$ if  \Cref{thm:sussman} holds at $\bm{x}_0 \in \mathbb{M}^n $ with $\bm{x}^*(0) = \bm{x}_0.$
\end{theorem}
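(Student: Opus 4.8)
The plan is to read the statement as Corollary~1.1 of Bianchini and Stefani \cite{bianchini1990graded} specialized to the reference control $\bm{u}\equiv\bm{0}$, and to reduce it to the small-time local controllability that \Cref{thm:sussman} already delivers, rather than re-running the graded Chen-Fliess/Volterra-expansion machinery behind it. First I would apply \Cref{thm:sussman}: by hypothesis both its LARC condition and its $\bm{w}$-neutralization condition hold at $\bm{x}_0$, so $\Sigma$ is STLC at $\bm{x}_0$. Unwinding item D1.4 of Definition~1, this says that for \emph{every} $S>0$ there is an open set $V_S$ with $\bm{x}_0\in V_S\subseteq R_{\Sigma}(\bm{x}_0,\le S)$.

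The second and main step is a concatenation argument that pushes this interior point forward along the drift. By item D1.5 the reference trajectory is the integral curve of the drift, $\bm{x}^*(t)=\Phi^{\bm{f}}_t(\bm{x}_0)$, where $\Phi^{\bm{f}}_t$ is the (local) flow of $\bm{f}$, a diffeomorphism onto its image for $t$ small. Fix a small $T>0$ and take any $t\in[0,T)$. Applying the STLC conclusion with $S=T-t>0$ yields an open $V\ni\bm{x}_0$ with $V\subseteq R_{\Sigma}(\bm{x}_0,\le T-t)$. Every point of $\Phi^{\bm{f}}_t(V)$ is reachable from $\bm{x}_0$ in time at most $T$ -- steer into $V$ in time $\le T-t$, then follow $\bm{u}=\bm{0}$ for the remaining time $t$ -- so $\Phi^{\bm{f}}_t(V)\subseteq R_{\Sigma}(\bm{x}_0,\le T)$; since $\Phi^{\bm{f}}_t(V)$ is open and contains $\Phi^{\bm{f}}_t(\bm{x}_0)=\bm{x}^*(t)$, we get $\bm{x}^*(t)\in\operatorname{int}\bigl(R_{\Sigma}(\bm{x}_0,\le T)\bigr)$ for all $t\in[0,T)$, and for $t=T$ after enlarging $T$ slightly. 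By item D1.5 this is exactly local controllability of $\Sigma$ along $\bm{x}^*$. As a sanity check, if $\bm{f}(\bm{x}_0)=\bm{0}$ then $\bm{x}^*$ is stationary, $\Phi^{\bm{f}}_t=\operatorname{id}$, and the conclusion collapses to STLC, in agreement with the Remark following Definition~1.

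The step I expect to need the most care is a technical one. Definition~1 only guarantees $\bm{0}_m\in\operatorname{int}(\operatorname{conv}(U))$, not $\bm{0}_m\in U$, so the pure drift segment ``$\bm{u}=\bm{0}$ for time $t$'' used above need not by itself be an admissible $\Sigma$-trajectory. If $\bm{0}_m\in U$ -- the standing assumption in most treatments -- there is nothing to fix. Otherwise one replaces that segment by chattering controls valued in $U$ whose time averages approximate $\bm{0}_m$; by standard relaxation (Filippov-type) results the corresponding reachable set is $C^0$-close to $\Phi^{\bm{f}}_t(V)$, and together with openness of the $\Sigma$-endpoint map at interior points -- itself a consequence of the LARC, the first hypothesis of \Cref{thm:sussman} -- one still recovers an open subset of $R_{\Sigma}(\bm{x}_0,\le T)$ through $\bm{x}^*(t)$. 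Everything genuinely hard, namely showing that the bad brackets are dominated so that STLC actually holds, has been outsourced to \Cref{thm:sussman}, exactly as Bianchini and Stefani outsource it in their Corollary~1.1; the content added here is only the passage from a point to a trajectory, which the concatenation above supplies.
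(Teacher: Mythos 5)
The paper itself gives no proof of this statement: it is imported verbatim as Corollary~1.1 of Bianchini and Stefani \cite{bianchini1990graded}, so there is no internal argument to compare against. Your write-up is a correct, self-contained derivation of the statement \emph{as formulated here}: because the reference trajectory in item D1.5 is the drift trajectory ($\bm{u}=0$), STLC at $\bm{x}_0$ from \Cref{thm:sussman}, combined with the concatenation ``steer into an interior neighborhood $V$ in time $\le T-t$, then coast along the drift for time $t$,'' pushes $V$ forward by the local diffeomorphism $\Phi^{\bm{f}}_t$ and places $\bm{x}^*(t)$ in $\operatorname{int}\bigl(R_{\Sigma}(\bm{x}_0,\le T)\bigr)$, which is exactly D1.5; your observation that the stationary case collapses to STLC is consistent with Remark~1. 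Two caveats. First, your fallback for the case $\bm{0}_m\notin U$ is only a sketch: the relaxation step shows that points \emph{near} $\Phi^{\bm{f}}_t(V)$ are reachable, but by itself it does not put the exact point $\bm{x}^*(t)$ in the interior, and ``openness of the endpoint map at interior points'' is not a direct consequence of the LARC; under the natural reading that the reference control $\bm{u}=0$ is admissible (as it is for the DS system, where $u=\dot\phi$ ranges over an interval containing zero, and as is implicit in D1.5 defining the reference by $\bm{u}=0$) this issue simply does not arise, so the main argument stands. Second, be aware of what your reduction does and does not buy: it proves the special case the paper actually uses (drift reference trajectory, hypotheses checked at the initial point), whereas the cited Bianchini--Stefani machinery formulates graded neutralization conditions along general admissible reference trajectories and is genuinely stronger; for the theorem as stated with Definition~1, however, your elementary concatenation argument suffices.
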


\subsection{Novel differential geometric control formulation and controllability analysis of dynamic soaring}\label{sec:gct_ds}
The equations of motion for DS as presented in \Cref{eqn:dynamics} are not in the control-affine form of \Cref{eqn:controlAffine}, hence GCT is not directly applicable to the highly underactuated, nonlinear system of DS. An attempt to introduce a control-affine formulation of the DS problem can be found in the literature by a recent study done by the first author and some other authors \cite{mir2018controllability}. In that work, the body angular velocities $p,q$, and $r$ \cite[chapter 3]{nelson1998flight} were used to transform the DS equations into a control-affine form as follows:
\begin{equation}\label{eqn:DS_CA}
    \dot{\bm{x}} = \bm{f}(\bm{x}) + \bm{b_1}p + \bm{b}_2 q + \bm{b_3}r,
\end{equation}
with $\bm{x} = [x,y,z,V,\gamma,\psi,\phi,\theta]^T \in \mathbb{R}^8$,
using the following transformations:
\begin{align}\label{eqn:transformation}
    p &= \dot{\phi}-\dot{\psi}\sin \theta,\\
     q &= \dot{\theta} \cos \phi + \dot{\psi} \cos \theta \sin \phi, \\
    r &= \dot{\psi} \cos \theta \cos \phi - \dot{\theta} \sin \phi. \label{eqn:constraint_psiphi}
\end{align}

However, a major issue with this formulation is that it is hard to find how $p,q,r$ can be generated as control inputs since they inherently depend on the control inputs in \Cref{eqn:probForm} of the non-control-affine system in \Cref{eqn:dynamics}. More importantly, the dependence between $p,q,r$ can create an issue of well-posedness for the control-affine formulation in \Cref{eqn:DS_CA} as normally we have access to an independent set of admissible control inputs and require $\bm{0}$ to be in the interior of the admissible control set as in Definition 1. For instance, if $r=0$, which was assumed in the above-mentioned work \cite{mir2018controllability}, then \Cref{eqn:constraint_psiphi} would mean that the state dynamics of $\psi$ and $\theta$ are directly related. Since both $\psi$ and $\theta$ are in the state space of the system in \Cref{eqn:DS_CA}, this means that system dynamics is constrained to take place -- regardless of $p$ and $q$ -- in a sub-manifold with dimension $7$ instead of $8$. This invokes the possibility of the Frobenius theorem \cite[chapter 2]{abraham2008frobenius} for the existence of integrable curves which preclude controllability. We demonstrate in the supplementary material through simulation that all of the control inputs $p, q, r$ have to be non-zero in order to obtain equivalent results to the DS system in \Cref{eqn:dynamics} with the control inputs in \Cref{eqn:probForm}. In addition, we could not obtain a similar optimal solution with r = 0.
%
So, in this work, we aim at characterizing the DS problem with a different perspective; instead of using the above-mentioned transformation in \Cref{eqn:DS_CA}-\Cref{eqn:constraint_psiphi}, we propose to use an adaptation/control law for the actual control input(s) that we can generate/apply. In \cite{sachs2005minimum}, it is shown that -- in the DS problem -- one of the control inputs in \Cref{eqn:probForm}, $C_L$, can be taken as a constant while $\phi$ can be the only time-varying control input.  Motivated by the fact that a single roll control input can affect the whole dynamics of DS, we assume the adaptation/control law as $\dot{\phi}=u$. This simple assumption is based on really what soaring birds hypothetically are doing: they change their rolling based on certain assessments and processes, which is the particular control law $u$. This simple assumption leads to a control-affine formulation of the DS problem. However, before moving forward, it is \textit{essential} to ensure that the proposed structure deems the system controllable -- as natural as the bird, especially in the windward climb phase of the DS which is, a critical, yet less studied phase when compared to the gliding (leeward descent) phase \cite{mir2018review}. Furthermore, it is also essential to find that particular control law $u$ that allows the DS conduction and validates our hypothesis of the simplicity of this proposed structure; this will be examined in the next section.

Now we are in a position to utilize GCT to formulate/propose, then analyze and prove the controllability of the DS system. The equations of motion in \Cref{eqn:dynamics} representing the dynamics of birds/mimicking systems performing DS can be written in control-affine form by including $\phi$ in the state of the system itself and having the adaptation law $u$ as the state dynamics. The proposed structure of the control-affine DS system is represented as follows:
\begin{align}\label{eqn:proposedStructure}
    \dot{\bm{x}} = \bm{f}(\bm{x}) + \bm{b}u,
\end{align}
where,
\begin{align}\label{eqn:proposedStructure_states}
    \bm{x} = \begin{bmatrix}
    x\\
    y\\
    z\\
    V\\
    \gamma\\
    \psi\\
    \phi
    \end{bmatrix},
    \bm{f}(\bm{x}) = \begin{bmatrix}
    V \cos\gamma \cos\psi\\
    V \cos\gamma \sin\psi-W\\
    V \sin\gamma\\
   \frac{1}{m}( -D -mg\sin\gamma+m\dot{W} \cos\gamma \sin\psi)\\
    \frac{1}{mV} (L\cos \phi - mg \cos \gamma - m \dot{W} \sin \gamma \sin \psi) \\
    \frac{1}{mVcos \gamma}(L \sin \phi + m \dot{W} \cos \psi) \\
    0
    \end{bmatrix},
    \bm{b} = \begin{bmatrix}
    0\\
    0\\
    0\\
    0\\
    0\\
    0\\
    1
    \end{bmatrix}.
\end{align}

Now, we study the LC of the system along a trajectory $x^*(t)$. This reference trajectory is defined by setting $u=0$ \cite{bianchini1990graded}. Remarkably, this is exactly the DS system and equations of motion in \Cref{eqn:dynamics} with a constant rolling; the bird then updates the actuation based on the control law (its assessment and sensing of its state) and is expected to be controllable as it is in nature. According to \Cref{thm:bianchi}, LARC  needs to be computed only at the initial point of the trajectory. We choose the initial point to be in the climb phase responsible for acquiring lift (energy from the wind) and height (have the orientation of the body in headwind positioning). The initial point is taken as
\begin{equation}\label{eqn:initial}
    \bm{x}_0 = [0,0,z_0,V_0,\gamma_0,\psi_0,\phi_0]^T
\end{equation}
with $ z_0 = 10,V_0=14,\gamma_0=0.6,\psi_0=1.4,\phi_0=-0.1 $ with units in SI system. Now, we present \Cref{thm:controllability} to provide LC along the reference trajectory $x^*(t)$.
\begin{theorem}\label{thm:controllability}
The control-affine system in \Cref{eqn:proposedStructure}-\Cref{eqn:proposedStructure_states} characterized by the triplet $\Sigma = (\mathbb{M}^7,\mathbb{F}= \{\bm{f,b} \},U \text{as in Definition 1})$ is locally controllable along the reference trajectory $\bm{x}^*(t)$ with $\bm{x}^*(0)=\bm{x}_0$ in \Cref{eqn:initial}. 
\end{theorem}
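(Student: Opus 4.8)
The plan is to verify the two hypotheses of \Cref{thm:sussman} at the single point $\bm{x}_0$ of \Cref{eqn:initial} and then invoke \Cref{thm:bianchi}, since by construction the reference trajectory $\bm{x}^*$ is the solution of \Cref{eqn:proposedStructure} with $u\equiv 0$ and $\bm{x}^*(0)=\bm{x}_0$. The computations simplify because the single control vector field $\bm{b}=\bm{e}_7$ is \emph{constant}: every iterated bracket with the drift reduces to a derivative, $ad^{1}_{\bm{f}}\bm{b}=[\bm{f},\bm{b}]=-\partial\bm{f}/\partial\phi$, and inductively $ad^{k+1}_{\bm{f}}\bm{b}=\frac{\partial}{\partial\bm{x}}\!\big(ad^{k}_{\bm{f}}\bm{b}\big)\bm{f}-\frac{\partial\bm{f}}{\partial\bm{x}}\,ad^{k}_{\bm{f}}\bm{b}$. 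I would first record that $\phi$ enters $\bm{f}$ only through the $L\cos\phi$ and $L\sin\phi$ terms of the $\dot\gamma$ and $\dot\psi$ equations, so $ad^{1}_{\bm{f}}\bm{b}$ is supported only on the $\gamma$- and $\psi$-components; passing to $ad^{2}_{\bm{f}}\bm{b}$, the term $\frac{\partial\bm{f}}{\partial\bm{x}}\,ad^{1}_{\bm{f}}\bm{b}$ is a combination of $\partial\bm{f}/\partial\gamma$ and $\partial\bm{f}/\partial\psi$, which are supported on the $x,y,z,V,\gamma,\psi$ rows. Thus $ad^{2}_{\bm{f}}\bm{b}$ already ``reaches'' every coordinate except $\phi$, and $\phi$ is covered by $\bm{b}$ itself, so full rank of the accessibility distribution is structurally plausible with only the brackets $ad^{k}_{\bm{f}}\bm{b}$.

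For hypothesis~1 (LARC), I would then show that the seven vector fields $\{\bm{b},ad^{1}_{\bm{f}}\bm{b},ad^{2}_{\bm{f}}\bm{b},\dots,ad^{6}_{\bm{f}}\bm{b}\}$ are linearly independent at $\bm{x}_0$. Concretely this means substituting the analytic expressions for $W$, $\dot W=W'(z)V\sin\gamma$, $L$, $D$ into the recursion, evaluating at $z_0=10$, $V_0=14$, $\gamma_0=0.6$, $\psi_0=1.4$, $\phi_0=-0.1$ (and the fixed $C_L$), and checking that the resulting $7\times 7$ matrix has nonzero determinant. This symbolic-then-numeric determinant evaluation, best carried out with a computer algebra system, is the computational crux of the proof; I expect it to be nonzero because the chosen initial angles are generic, i.e.\ not values at which the trigonometric couplings in the cascade above degenerate. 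Since the system is real-analytic on the open set $\{V\neq 0,\ \cos\gamma\neq 0\}$, which contains $\bm{x}_0$ (here $\cos\gamma_0=\cos 0.6\neq 0$), \Cref{thm:larc} applies and this rank computation certifies accessibility at $\bm{x}_0$.

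Hypothesis~2 (neutralization of the potential obstructions) is the conceptual heart, and the expected main obstacle. Because $m=1$, the permutation group is trivial, so $\beta^{\bm{w}}$ consists of bad brackets (odd number of $\bm{f}$'s, even number of $\bm{b}$'s) lying in $S=\mathrm{Lie}^\infty\{ad^{k}_{\bm{f}}\bm{b}\}$; note $\bm{f}$ itself, though in $\beta$, is \emph{not} in $S$ and hence not in $\beta^{\bm{w}}$, so every element of $\beta^{\bm{w}}$ contains at least two copies of $\bm{b}$ and at least one $\bm{f}$. The lowest such bracket is $[\bm{b},[\bm{f},\bm{b}]]=\partial^2\bm{f}/\partial\phi^2$ restricted appropriately, and a short computation shows $[\bm{b},[\bm{f},\bm{b}]](\bm{x}_0)$ is \emph{not} a scalar multiple of $ad^{1}_{\bm{f}}\bm{b}(\bm{x}_0)$ (their $(\gamma,\psi)$-components have nonzero ``cross product'' $-L^2/(m^2V^2\cos\gamma)$), so this bad bracket is genuine and cannot be dismissed trivially. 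The resolution is to exploit the freedom in the admissible weight $\bm{w}=(w_0,w_1)$: since each element of $\beta^{\bm{w}}$ has $\bm{w}$-weight at least $2w_1+w_0$, while each of the seven spanning brackets $ad^{k}_{\bm{f}}\bm{b}$ ($0\le k\le 6$) contains exactly one $\bm{b}$ and so has $\bm{w}$-weight at most $6w_0+w_1$, choosing $\bm{w}=(1,6)$ (admissible per D2.9 since $w_1\ge w_0$ for a bounded roll input) gives $6w_0+w_1=12<13=2w_1+w_0$. Hence every element of $\beta^{\bm{w}}$ has weight strictly larger than all seven brackets of hypothesis~1, and because those already span $T_{\bm{x}_0}\mathbb{M}^7\cong\mathbb{R}^7$, every element of $\beta^{\bm{w}}$ is at $\bm{x}_0$ a linear combination of brackets of strictly smaller $\bm{w}$-weight, i.e.\ $\bm{w}$-neutralized. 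Both hypotheses of \Cref{thm:sussman} then hold at $\bm{x}_0$, so $\Sigma$ is STLC at $\bm{x}_0$, and \Cref{thm:bianchi} gives local controllability of $\Sigma$ along $\bm{x}^*(t)$, as claimed.
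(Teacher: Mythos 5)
Your proposal follows essentially the same route as the paper's proof: verify the LARC at $\bm{x}_0$ by a symbolic/numeric rank computation on iterated brackets of $\bm{f}$ and $\bm{b}$, identify $[\bm{b},[\bm{f},\bm{b}]]$ as the lowest non-vanishing bad bracket, neutralize it (and all of $\beta^{\bm{w}}$, whose elements have weight at least $w_0+2w_1$) by exploiting the freedom in the admissible weight, and then invoke \Cref{thm:sussman} followed by \Cref{thm:bianchi}. The only differences are cosmetic: you span with $\{\bm{b},ad^{1}_{\bm{f}}\bm{b},\dots,ad^{6}_{\bm{f}}\bm{b}\}$ and take $\bm{w}=(1,6)$, whereas the paper spans with $\bm{f}$, $\bm{b}$, $[\bm{f},\bm{b}]$ and $ad^{k}_{\bm{f}}\bm{b}$ up to $k=5$ and takes $\bm{w}=(1,5)$ — both hinge on the same kind of computer-algebra rank check at $\bm{x}_0$, which the paper carries out explicitly.
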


\begin{proof}
First, we verify the LARC distribution in \Cref{eqn:larc} as follows:
\begin{align}\label{eqn:span_in_proof}
    \Delta _{\bm{x}_0}= span\{ \bm{f},  \bm{b}, [\bm{f},[\bm{f},\bm{b}]], [\bm{f},[\bm{f},[\bm{f},\bm{b}]]], [\bm{f},[\bm{f},[\bm{f},[\bm{f},\bm{b}]]]], [\bm{f},[\bm{f},[\bm{f},[\bm{f},[\bm{f},\bm{\bm{b}}]]]]]  \}_{\bm{x}_0}.
\end{align}
The rank of ($\Delta _{x_0})=7$ which is full. Thus, the system given by \Cref{eqn:proposedStructure} satisfies the first requirement of \Cref{thm:sussman} and \Cref{thm:bianchi}. No bad brackets -- element of $\beta$ -- are utilized in ensuring the LARC. However, because the bad bracket $B_b = [\bm{b}, [ \bm{f},\bm{b}]]$ does not vanish at $\bm{x}_0$, further analysis (Definition 2) is required to ensure that the bad bracket $B_b$ is neutralizable and does not preclude controllability.
Since $|B_{b}|_0 = 1$ and $|B_b|_1 =2$, then $B_b \in \beta$.  Now, by choosing $\bm{w} = (w_0 ,w)$ for some $w_0,w \in \mathbb{Z}^+$, it is clear that $||B_b||_{\bm{w}}= w_0+2w$. Furthermore, we have $L = B_b$ with $\bm{\sigma}(L) = L$. This implies that $\beta_s^w = \{ L \} \subseteq \beta^w$. Clearly $L \in span \{ \Delta_{\bm{x}_0} \}$ in \Cref{eqn:span_in_proof}. $L$ is neutralizable if it has $\bm{w}-$weight strictly larger that $\bm{w}-$weight of the good brackets used in  $\Delta_{\bm{x}_0} $ in  \Cref{eqn:span_in_proof}. It follows immediately that the quantity $w_0 +2w$ is strictly larger than the weights $w_0, w, w+w_0$ of $\bm{f},\bm{b},[\bm{f},\bm{b}]$. However, in the cases of $[\bm{f},[\bm{f},\bm{b}]], [\bm{f},[\bm{f},[\bm{f},\bm{b}]]], [\bm{f},[\bm{f},[\bm{f},[\bm{f},\bm{b}]]]]$, and $[\bm{f},[\bm{f},[\bm{f},[\bm{f},[\bm{f},\bm{b}]]]]]$, following inequalities need to be satisfied.
\begin{align}
    w_0+2w &> 2w_0+w = ||[\bm{f},[\bm{f},\bm{b}]] ||_{\bm{w}},\\
    w_0+2w &> 3w_0+w =|| [\bm{f},[\bm{f},[\bm{f},\bm{b}]]]||_{\bm{w}},\\
    w_0+2w &> 4w_0+w =||  [\bm{f},[\bm{f},[\bm{f},[\bm{f},\bm{b}]]]]||_{\bm{w}},\\
    w_0+2w &> 5w_0+w =||  [\bm{f},[\bm{f},[\bm{f},[\bm{f},[\bm{f},\bm{b}]]]]]||_{\bm{w}}.
\end{align}
All of the above inequalities are simultaneously satisfied if $w>4w_0$ (for example, take $\bm{w} = (1,5)$). We need, however, to ensure the neutralization of the other elements in $\beta^w = Lie^\infty (\bm{f},L) \cap S$. For an arbitrary element $B_{\beta} \in \beta^w$,  we have $B_{\beta} \in Lie^\infty (\bm{f},L) \backslash \{\bm{f} \}$. It follows immediately that $|B_{\beta}|_{\bm{w}} \ge |{L=B_b}|_{\bm{w}} = w_0 +2w$. This implies that any $B_{\beta} \in \beta^w$ is neutralizable by the brackets in $\Delta_{\bm{x}_0}$ if $w>4w_0$. This, in addition to satisfying the LARC, proves LC along the trajectory $x^*(t)$.
\end{proof}

\section{Dynamic soaring as a control-affine extremum seeking system}\label{sec:four_esc}
\subsection{Results and simulations}\label{sec:simulations}

Here, we propose the expression of the particular control law $\dot{\phi} = u$, which effectively turns the control-affine DS system in \Cref{eqn:proposedStructure_states} into a control-affine ESC system similar to those provided in \cite{ESCTracking,DURR2013}; this DS ESC system, like any ESC system, can be model-free and is implementable \textit{in real time}.
A common structure for a control-affine ESC system is \cite{ESCTracking,DURR2013,VectorFieldGRUSHKOVSKAYA2018}:
\begin{equation}\label{eqn:ESC2018}
    \dot{x}=u,
\end{equation}
where
$u=b_1(J(x))u_1(t)+ b_2(J(x)) u_2 (t);$ $b_1,b_2$ are functions of $J(\bm{x})$ itself (the objective function).
It is important to emphasize that $J(x)$ can be accessible through measurements and not necessarily by a mathematical expression as it is known in ESC literature in general \cite{ariyur2003real,DURR2013}. For $b_1,b_2,u_1$ and $u_2$, the following assumptions need to be satisfied.
\begin{enumerate}[label=A\arabic*.]
    \item 
    $b_i\in C^2: \mathbb{R} \to \mathbb{R}, i={1,2}$.
    \item 
    For every compact set $\mathscr{C} \subseteq \mathbb{R}$, there exist $A_1, ..., A_3 \in [0,\infty)$ such that $|b_i(x)|\leq A_1,
    | \frac{\partial b_i(x)}{\partial x}|\leq A_2,
    |\frac{\partial [{b_j},{b_k}](x)}{\partial x}| \leq A_3$ for all $x\in \mathscr{C}, i={1,2};\; j={1,2};\; k={1,2}.$
    \item 
    ${u}_i: \mathbb{R} \times \mathbb{R} \to \mathbb{R} , i=1,2$, are measurable and bounded functions. Moreover,
    ${u}_i(\cdot)$ is T-periodic, i.e. ${u}_i(\omega t + T)={u}_i(\omega t),$ and has zero average, i.e. $\int_0^T {u}_i(\tau) d\tau = 0,$ with $T \in (0,\infty)$ for all $\omega t \in \mathbb{R}$.
\end{enumerate}
A particular case of control-affine ESC systems, typically used in many ESC problems \cite{DURR2013,VectorFieldGRUSHKOVSKAYA2018} is when we choose $b_1 = J(x)$, $b_2 = 1$ as demonstrated in \Cref{fig:control_affine}.  This ESC structure, via the variations of $u_1$ and $u_2$, realizes the gradient based on the objective function measurements and converges to a neighborhood around the extremum point of the objective function. In fact, $u_1$ and $u_2$ are better to be orthogonal to each other. The signals $u_1$ and $u_2$ are typically taken as sinusoidal functions. In fact, in GCT language, setting $u_1$ and $u_2$ as sinusoidal functions provide a distribution for the LARC (see \Cref{thm:larc}) that is $\Delta$=span\{$[J(x),1]=-\nabla J(x)$, i.e., the tangent space of this simple ESC system in \Cref{eqn:ESC2018} can be spanned effectively by the gradient depending on the frequency as known in control-affine systems. For more 
details on this, we provide a demonstrating example and simulation in the supplementary material. Our proposed ESC system for DS will also have a similar structure to \Cref{fig:control_affine} but the cost function will depend on the states of the DS system as shown in \Cref{fig:framework}.

As it is seen in nature, birds like albatrosses have the energy for flying hundreds of miles by performing DS maneuvers autonomously and in real-time. So, there should be a real-time autonomous control mechanism that enables the birds to find the path allowing them to extract maximum energy from wind; hence, reducing the flying effort. One way to do that is to perturb about its state by a control action and assess -- through sensing -- the corresponding change in wind, speed, and altitude. Based on the energy state, the actuation decision will be taken. Now, if assessing the energy state is a 
periodic/periodic-like process, then the actuation becomes much easier to conduct and learn (memorize). Our proposed ESC structure does the same process in real-time. An objective function $J(\bm{x},\dot{W})$ is designed
\begin{wrapfigure}{r}{0.4\textwidth}
    \centering
    \includegraphics[width=0.4\textwidth]{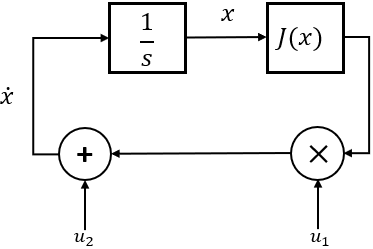}
    \caption{Structure of a particular case of control-affine ESC systems.}
    \label{fig:control_affine}
\end{wrapfigure}
  such that the changes are measured (sensed) based on wind, velocity, and height. The use of control signals $u_1$ and $u_2$ provides a periodic repetitive actuation that incorporates the objective function measurements. Assumptions A1-A3 have to be ensured so that the control input $u$ will cause autonomous seeking of maximization of the objective function. Now, for the objective functions in \Cref{eqn:objective}, we provide the corresponding expression to specific energy gain, $J=-V\dot{W} \cos \gamma \sin \psi /g $ aiming at maximizing it. This expression is derived here. We define specific energy as total energy per weight, such that:
\begin{equation}
    e = E/mg =z+\frac{V^2}{2g}.
\end{equation}
Now, taking derivative with respect to time, we have:
\begin{equation}
\begin{split}
    \dot{e}&=\dot{z}+\frac{V\dot{V}}{g}= V \sin\gamma + \frac{V}{g}[-D-g\sin\gamma -\dot{W}\cos \gamma \sin \psi] \\
    &=-\frac{DV}{g}-\frac{V\dot{W}\cos \gamma \sin \psi}{g}.\label{eqn:energyGain}
\end{split}
\end{equation}
Observe that from \Cref{eqn:energyGain}, the term $-V\dot{W} \cos \gamma \sin \psi /g $ is solely responsible for energy gain from the wind. Hence, we use said term as the objective function.
\begin{figure}[ht]
    \centering
    \includegraphics[width=0.7\textwidth]{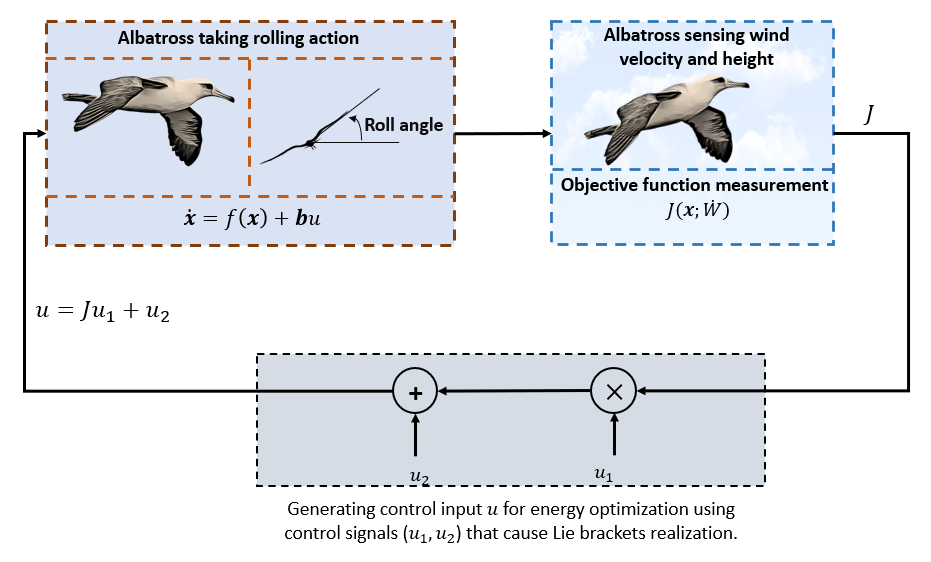}
    \caption{Framework for ESC control structure for the DS problem.}
    \label{fig:framework}
\end{figure}
\begin{figure}[ht]
\centering
\includegraphics[width=0.7\textwidth]{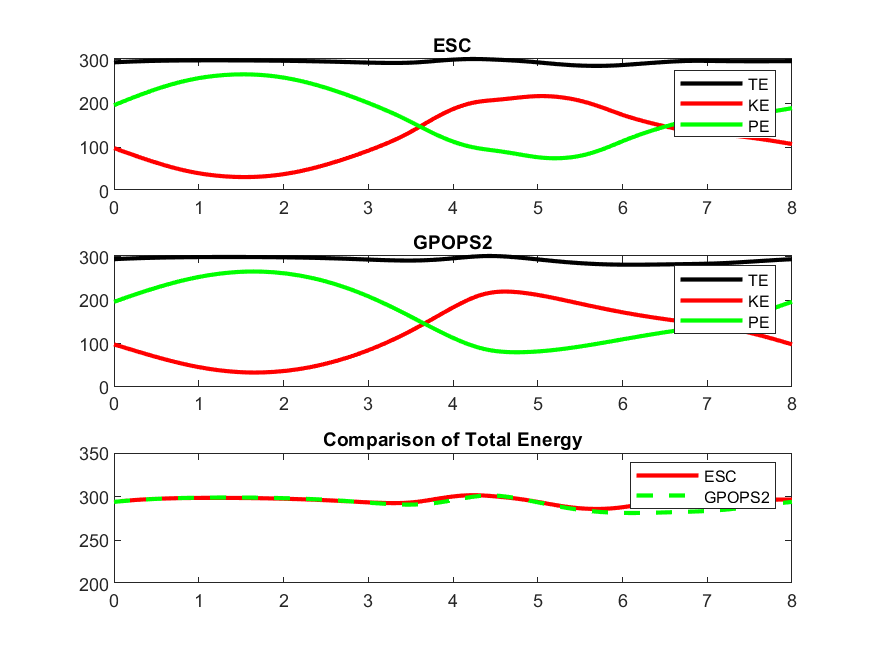}
\caption{Comparison of the total energy of albatross obtained using ESC controller and GPOPS2.}
\label{fig:TotalEnergy}
\end{figure}
Now, we present the simulation results and compare them with the optimal control software GPOPS2 \cite{gpops2} which uses hp-adaptive Gaussian quadrature collocation methods and sparse nonlinear programming. The Simulink\textsuperscript{\textregistered} model used for our control structure is provided in the supplementary material.  The characteristics of the bird and wind in the simulation are taken as in \Cref{tab:params}. The control signal $u_1$ is taken as $a \cos (\omega t + \mu)$ and $u_2$ is taken as $b \sin (\omega t)$ with $\omega = 5.8, \mu = 0.55, a = 2.1, b = 0.8 $. We have carefully tuned the parameters above for optimal results. The initial states taken for simulation is $ [0, 0, 10, 14, 0.6 ,1.4, -0.1]$ and is the same as the initial point used in \Cref{thm:controllability}. \Cref{fig:TotalEnergy} shows the total energy level during the DS maneuver obtained using our control-affine ESC implementation and GPOPS2 non-real-time implementation, and a comparison between the two methods. It can be seen in \Cref{fig:TotalEnergy} that the total energy (TE) throughout the DS cycle is almost constant. Moreover, the total energy is almost the same at the beginning and the end of the cycle, i.e., the kinetic energy (KE) and the potential energy (PE) are traded off during the maneuver without any loss to drag. In other words, the system is a conservative-like system, contrasting with typical flight dynamic systems which are dissipative due to the non-conservative drag force. This highlights that our implementation is successful as we have achieved the most significant characteristic of the DS phenomenon: energy neutrality (or near neutrality). \Cref{fig:3D} compares the trajectory of the bird obtained using our proposed control-affine ESC in real-time with the optimal trajectory obtained using GPOPS2 in non-real-time starting at the same initial point.
The trajectory obtained using the DS control-affine ESC consists of all of the characteristic phases of DS, i.e., wind-ward climb, high altitude turn, leeward descent, and low altitude turn, and the trajectory is also very close to the optimal trajectory provided by GPOPS2. \Cref{fig:allStates} provides the state variables and control input vs. time using the proposed control-affine ESC in real-time vs.GPOPS2 results computed in non-real-time.
\begin{figure}[ht]
\centering
\includegraphics[width=0.8\textwidth]{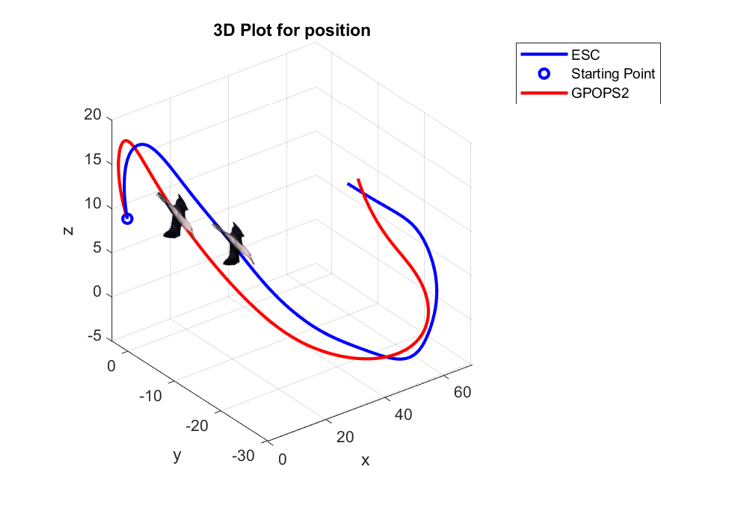}
\caption{Comparison of trajectories of albatross obtained using ESC controller and GPOPS2.}
\label{fig:3D}
\end{figure}
The trajectories of the states obtained using both control-affine ESC and GPOPS2 are close. Here we summarize the novel advantages of using ESC vs. optimal optimizers like those in DS decades-long literature. Our proposed DS, ESC system is autonomous and operates in real-time, without the need for a model or expression for the objective function. Moreover, the ESC implementation does not need the constraints and bounds in \Cref{eqn:constraints1} and \Cref{eqn:constraints2}. On the contrary, the optimal control solver GPOPS2 representing the decades-long optimal control configuration of DS, requires all provided models, bounds/constraints, specific wind profile models in section \ref{sec:two_prob_form}, and still operates in non-real-time; at times, optimal control solvers can take up to 1000 seconds to perform DS \cite{mir2018optimal}. Additionally, the proposed control-affine ESC system is much simpler and appears to be very natural to this nature-inspired phenomenon suggesting that the provided work in this paper captures important aspects of the biological phenomenon under study; it also takes us steps closer to real and industrial mimicking of soaring birds.


\subsection{Extremum seeking systems manifestation in nature: our approach and other's}\label{sec:discussion}
We dedicate this subsection to discussing the significance of our results and others who proposed similar ideas and contributions. In other words, we here like to put this paper in the context of an effort that has been taking place for more than a decade in decoding biological behavior using simpler mathematical methods, particularly ESC systems. Interestingly, proposing, decoding, or analyzing biological behaviors via easy and simple methods such as ESC, is not free from advanced mathematics and analysis. That is, simple methods that achieve strong conclusions/characterizations in biology seem to be analyzed/proved via -- while beautiful -- advanced and complex mathematical tools. The advantage here though is that the beautiful and advanced mathematics in simple methods is used to prove and analyze the dynamical behavior and its potential, but not for the implementation itself as this often contradicts the biological organism's computation/comprehension abilities. Starting with the paper at hand, one can see that it takes decades of advanced differential geometric control theory (recall section \ref{sec:three_gct}) to prove the controllability of the simple proposed control-affine DS system in \Cref{eqn:proposedStructure_states}. Hence, one can be assured that the very effective, but very simple, control-affine ESC system representing DS is able to execute the maneuver. We here recall the inspirational work in \cite{fish1}, which presents a biologically plausible interpretation of fish seeking prey without the knowledge of its position as well as the position of the source, in a model-free fashion, using also control-affine ESC. Even though a complicated fluid-structure interaction is involved in fish swimming, and there is  the presence of dynamics governed by the Navier-Stokes partial differential equations, the presented methods in that work are simple enough for a fish-brain implementation. This successful implementation of ESC methods for replicating fish swimming in real-time without involving the complexity of the fluid-structure, is almost conceptually paralleled in the problem under study in this paper: replace the fish with a bird, and the complex fluid-structure with the complex aerodynamics. Similar parallelism can be found in other works in literature, for example, Figure 2 in \cite{Cochran2009autonomous} and Figure 3 in \cite{cochran2009_3dsource_seeking} can be compared to our structure in \Cref{fig:framework}. So, the ESC implementation of DS implies that without comprehending the wind models and relevant aerodynamic complexities, the ESC process seems simple enough for a bird-brain to work, which then strengthens the plausibility of DS being an ESC manifestation in nature. Similarly, \cite{liu2010stochastic} provides an interpretation of chemotaxis-like motion observed in the bacterium Escherichia coli (\textit{E. Coli}) using stochastic source seeking algorithm in a control-affine ESC system framework. In that work, the authors consider a nonholonomic unicycle model that has no knowledge of its position, nor of the distribution of the signal field. The system reaches the optimal position by only sensing the signal locally; note here the conceptual parallelism between sensing said signals and albatross sensing wind speed via their nostrils as discussed in sections \ref{sec:motivation} and \ref{sec:contribution}, and provided in \Cref{fig:framework}. Even though advanced mathematical tools such as stochastic averaging are needed for the analysis and mathematical proof in said work, the ESC method itself -- similar to our DS control-affine ESC method -- is very simple and provides a plausible explanation for the motion of a simple organism like bacteria.

\section{Conclusions}
\label{sec:conclusions}
An effort towards building a bridge between the physics and biology of soaring birds is provided in this paper utilizing mathematical control theory and tools from geometric control theory and extremum seeking methods. The provided effort leads to a proposed system and control structure that replicates and implements in \textit{real-time} the DS phenomenon. This work is categorically different from the decades-long literature on DS. Hence, we hypothesize that is plausible that the proposed work is similar/close to the biological behavior observed in soaring birds. Expanding more on this, the paper provides a detailed mathematical analysis of a simple DS control-affine system using decades of mathematical advancements in GCT; the proposed control-affine DS system is proven to be controllable based on rolling action alone. Moreover, we provided a particular adaptation/control law for the rolling action inspired by ESC systems which effectively turned the control-affine DS system into a DS, control-affine ESC system. ESC philosophy, simplicity, and applicability in real-time in a model-free fashion, has significant similarity in their process/method to what is believed to be done by soaring birds: a process of dynamic optimization in real-time based on sensing (measurement) of their states to achieve a desired optimal state (maximizing energy gain from wind). The implemented framework in this paper, not only narrows the gap between the physics and biology of soaring birds using beautiful and advanced mathematical tools, but it also provides the control engineering literature with a more practical step towards industrial mimicking of soaring birds by unmanned systems and robotics. As discussed in the paper, this work continues an effort that has been taking place for over a decade to propose simple, but plausible, mathematical methods for decoding/mimicking biological behaviors that are often done by simple organisms that are unable to conduct complex computations, and certainly, cannot be conducting non-real-time motions.
\begin{figure}[ht]
\centering
\includegraphics[width=\textwidth]{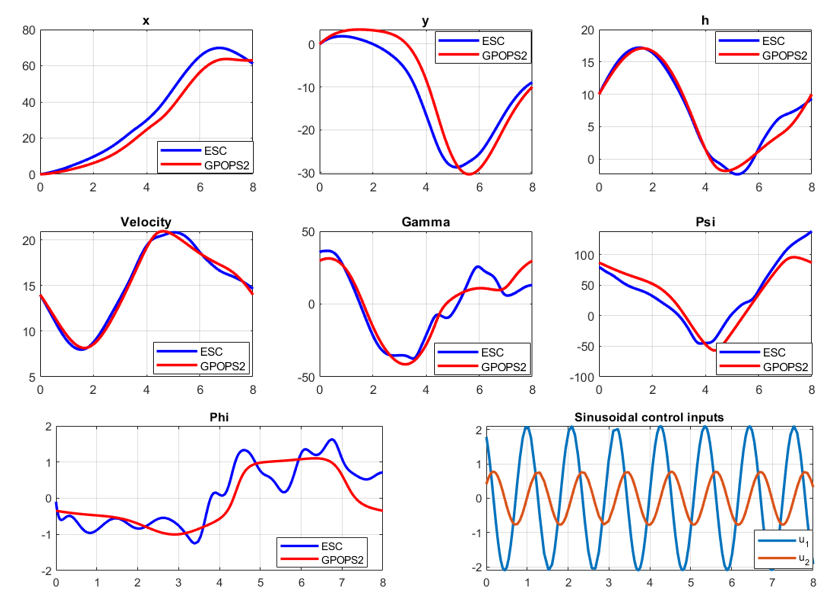}
\caption{Comparison of states and control inputs for albatross obtained using ESC controller and GPOPS2.}
\label{fig:allStates}
\end{figure}

\section*{Acknowledgement}
We would like to thank the editor, Prof. Suncica Canic, for the time and effort spent in conducting the review of this paper. In particular, the authors are grateful for the very helpful comments provided by the very experienced reviewer. This has helped to strengthen the case for the paper and put it in context with the literature. 

\appendix
\section{Chen-Fliess series}\label{sec:chen}
Chen-Fliess functional expansion \cite[chapter 3]{isidori1985nonlinear} is used to describe representations of input-output behavior of a nonlinear system described by differential equations of the control-affine form in \Cref{eqn:controlAffine} and associated output function $y=h(\bm{x})$, $y\in \mathbb{R}$. Let $T$ be a fixed value of the time and $u_1,...,u_m$ are real-valued piecewise continuous functions defined on [0,T]. Now, iterated integral for each multi-index ($i_k,...,i_0$) is defined as
\begin{equation}
    \int_0^t d\xi_{i_k}...d\xi_{i_0}=\int_0^t d\xi_{i_k}(\tau) \int_0^t d\xi_{i_{k-1}}... \int_0^t d\xi_{i_0},
\end{equation}
where $0\le t \le T$; $\xi_0(t)=t$; $\xi_i(t)=\int_0^t u_i(\tau)d\tau$ for $1 \le i \le m$.
%
Now, the Chen-Fliess expansion provides the evolution of the output $y(t)$ for a short time $t\in[0,T]$ as

\begin{equation}\label{eqn:chenFliess}
    y(t)=h(\bm{x}_0)+\sum \limits_{k=0}^\infty \sum \limits_{i_0 , ..., i_k = 0}^m  L_{\bm{b}_{i_0}}... L_{\bm{b}_{i_k}} h(\bm{x}_0) \int_0^t d\xi_{i_k}...d\xi_{i_0},
\end{equation}
where $L_{\bm{b}}h$ is the Lie derivative of $h$ along $\bm{b}$, i.e. $L_{\bm{b}}h=\nabla h\cdot \bm{b}$.

\section{Lie brackets}
We provide here the first two Lie brackets used in LARC distribution in \Cref{eqn:span_in_proof} between vector fields $\bm{f}$ and $\bm{b}$. The reader can find the generating code for all Lie brackets used in our results in the supplementary material.
\begin{equation*}
    [\bm{f},\bm{b}] = \left[ 0, 0, 0,0, \frac{L S_\phi }{mV}, \frac{-L C_\phi \sec (\gamma) }{ mV}, 0 \right]^T,
\end{equation*}

\begin{equation*}
    [\bm{f},[\bm{f},\bm{b}]] = \begin{bmatrix}
     \frac{L}{m}(C_\psi S_\gamma S_\phi - C_\phi S_\psi )\\
     \frac{L}{m} (C_\phi C_\psi + S_\gamma S_\phi S_\psi)\\
     -\frac{L}{m}C_\gamma S_\phi\\
     \frac{\eta L W_0 C_\phi C_\psi S_\gamma }{\delta \xi^2 m }- \frac{L S_\phi }{m^2 V} \left (-mg C_\gamma + \frac{\eta m V W_0 S_\phi }{\delta \xi^2}(C_\gamma^2-S_\gamma^2)\right )\\
     -\frac{L S_\phi}{m^2V^2} \left(  - \frac{ \eta m V W_0 C_\gamma S_\gamma S_\psi}{\delta \xi^2} -D   \right )- \frac{\eta L W_0 c_\phi C_\psi S_\gamma T_\gamma }{mV\delta \xi^2}   \\
     \frac{LC_\phi sec(\gamma) }{m^2V^2} A -\frac{\eta LW_0 C_\phi sec(\gamma) S_\phi T_\gamma}{mV \xi^2 \delta} -\frac{L S_\phi }{mV}B\\
     0
    \end{bmatrix},
\end{equation*}
with 
$C_{(\cdot)} = cos(\cdot)$, $S_{(\cdot)} = sin(\cdot)$, $T_{(\cdot)} = tan(\cdot)$, $\eta = e^{-h/\delta}$, $\xi = (1+e^{-h/\delta})$,
\begin{align*}
    A &= -D -mgS_\gamma + \frac{\eta m V W_0 C_\gamma S_\gamma  S_\psi}{\xi^2 \delta} + T_\gamma \left (mgC_\gamma -L C_\phi +\frac{\eta m V W_0 S_\gamma^2 S_\psi}{\xi^2 \delta} \right ),\\
    B &= \frac{\eta W_0 C_\psi }{ \xi^2 \delta}+ \frac{sec(\gamma)T_\gamma}{mV} \left (\frac{\eta m V W_0 C_\psi S_\gamma }{\xi^2 \delta }+ L S_\phi \right).
\end{align*}

\bibliographystyle{siamplain}
\bibliography{references}

\pagebreak
\section{Supplementary materials}
\subsection{Comparison of two Dynamic Soaring Structures}
In this section, we present a comparative result between two dynamical models for dynamic soaring. The first model, referred as ``DS", has the following state vector $\bm{x}(t)$ and the control vector $\bm{u}(t)$ as
\begin{equation}\label{eqn:probForm_app}
\begin{split}
    \bm{x}(t)&=[x,y,z,V,\gamma,\psi],\\
    \bm{u}(t)&=[C_L,\phi],
\end{split}
\end{equation}
and the nonlinear system is in the following form
\begin{equation}
    \bm{\dot{x}}(t)=\bm{f}(\bm{x}(t),\bm{u}(t)).
\end{equation}
Now, the second model, referred to as "control-affine DS" is a control-affine dynamic soaring model similar to \cite[section 5]{mir2018controllability} in the following form
\begin{equation}\label{eqn:DS_CA_app}
    \dot{\bm{x}} = \bm{f}(\bm{x}) + \bm{b_1}p + \bm{b_2}q + \bm{b_3}r,
\end{equation}
with
\begin{align}
    \bm{x}(t) &= [x,y,z,V,\gamma,\psi,\phi,\theta]^T,\\
    \bm{u}(t) &= [p,q,r].
\end{align}

We have simulated both models to obtain the optimal trajectory as shown in \Cref{fig:3D_plot_comparison}.  We can see in \Cref{fig:pqr} that all of the control inputs $p,q,r$ are non-zeros, i.e., all of the control inputs are essential to obtain the optimal trajectory (as shown in \Cref{fig:3D_plot_comparison}). In addition, we could not obtain a similar optimal solution with $r=0$. This verifies that $r=0$, as assumed  in \cite{mir2018controllability}, constraints the system dynamics  in a sub-manifold with dimension 7 instead of 8, and is not recommended in controllability analysis for the system in the form \Cref{eqn:DS_CA_app}.
\begin{figure}[ht]
    \centering
    \includegraphics[width=0.9\textwidth]{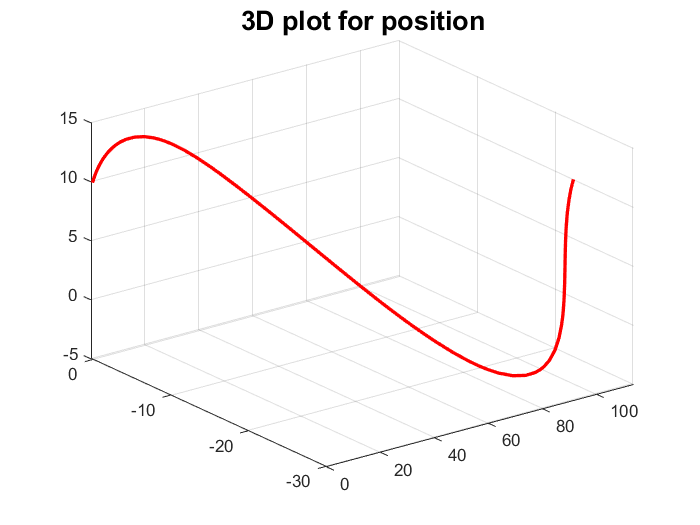}
    \caption{3D trajectories for the dynamic soaring problem generated using $\bm{u}(t)$ in \Cref{eqn:probForm_app} or equivalently $\bm{u}(t)$ in \Cref{eqn:DS_CA}.}
    \label{fig:3D_plot_comparison}
\end{figure}
\begin{figure}[h]
    \centering
    \includegraphics[width=0.8\textwidth]{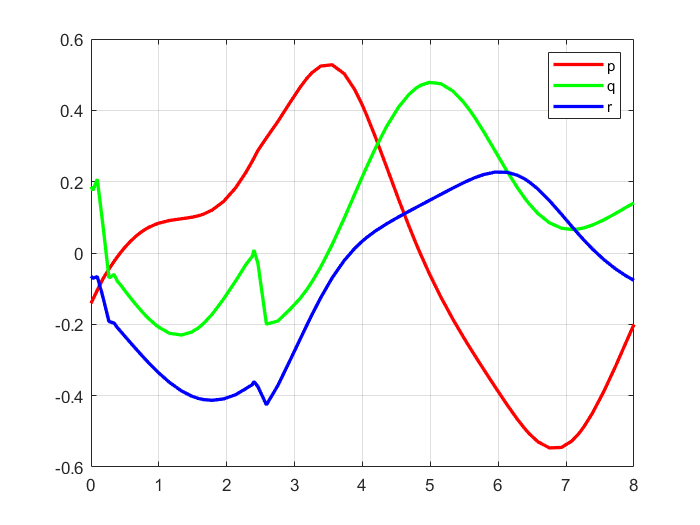}
    \caption{Body angular velocities.}
    \label{fig:pqr}
\end{figure}

\clearpage

\subsection{Control-affine Extremum Seeking Control}
In this section, we use a common structure for a control-affine Extremum Seeking Control (ESC) system in the form
\begin{equation}\label{eqn:ESC2018_app}
    \dot{x}=u,
\end{equation}
where $u=b_1(J(x))u_1(t)+ b_2(J(x)) u_2 (t);$ $b_1,b_2$ are functions of the objective function $J(\bm{x})$, to show the working of the control affine  ESC. As it is typically used in many applications like in \cite{DURR2013,VectorFieldGRUSHKOVSKAYA2018}, we set $b_1 = J(x)$, $b_2 = 1$. We take $u_1= \sqrt{\omega} \cos(\omega t)$, $u_2=\sqrt{\omega} \sin(\omega t), \omega=50$,  with $J(x)=2(x-x^*)^2, x\in \mathbb{R}; x^*=1, a=1$. The plot in  \Cref{fig:ESC_example} shows the evolution of the system \Cref{eqn:ESC2018_app} with time. The system is able to find an optimal point it is orbiting around it.
\begin{figure}[ht]
    \centering
    \includegraphics[width=0.5\textwidth]{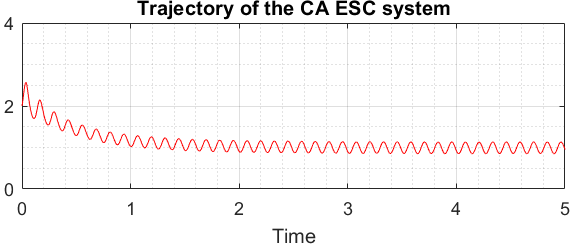}
    \caption{Trajectory of the CA ESC system represented by \Cref{eqn:ESC2018_app}.}
    \label{fig:ESC_example}
\end{figure}

\subsection{Simulink Diagram for Control-affine ESC controller}
In this section, we present the structure of the ESC system used as a real-time autonomous controller for dynamic soaring. The model of dynamic soaring takes two control inputs $C_L$ and $\phi$. Motivated by the work \cite{sachs2005minimum}, where it is shown that in the DS problem, one of the control inputs, $C_L$, can be taken as a constant while $\phi$ can be the only time-varying control input, we have also fixed $C_L$ and have an adaptation/control law for the other control input $\phi$. That adaptation law is provided by control-affine Extremum Seeking Control (ESC) as shown in \Cref{fig:simulinkModel}.
\begin{figure}[ht]
    \centering
    \includegraphics[width=0.9\textwidth]{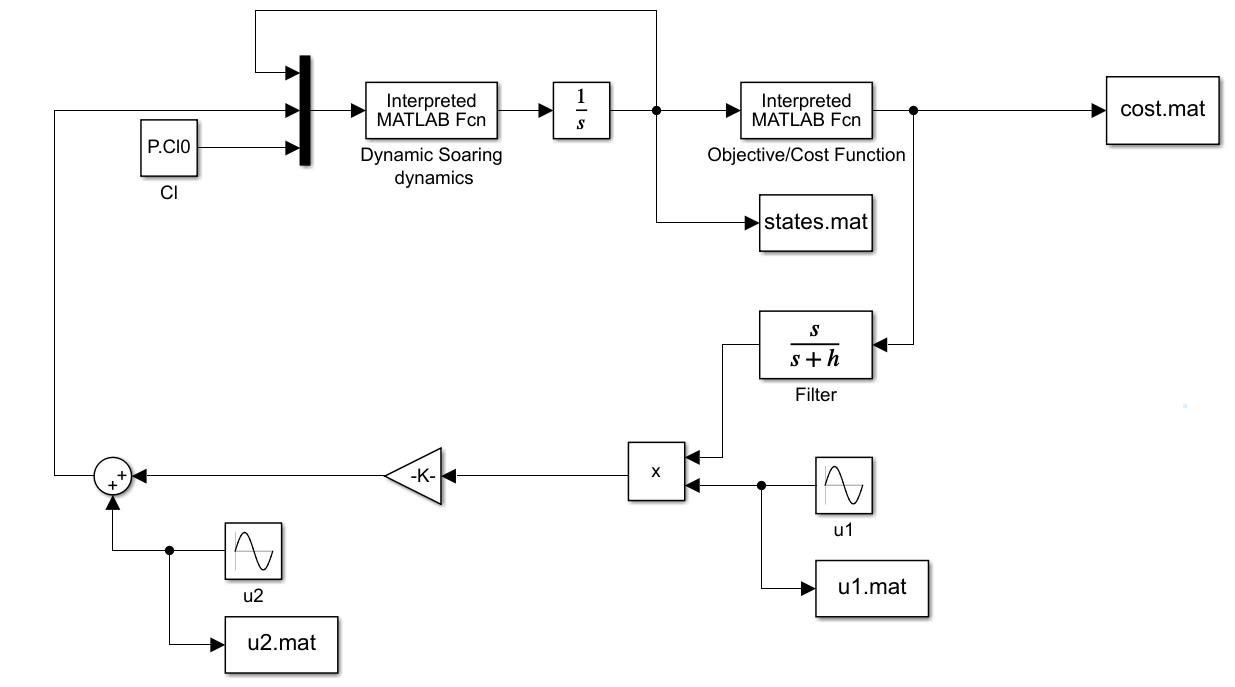}
    \caption{Simulink model for control-affine ESC controller.}
    \label{fig:simulinkModel}
\end{figure}

\clearpage

\subsection{Mathematica Codes for Lie bracket computation}
In this section, We have included the Mathematica codes to obtain the Lie brackets used in the following distribution  to verify LARC.

\begin{align*}\label{eqn:span_in_proof_app}
    \Delta _{\bm{x}_0}= span\{ \bm{f},  \bm{b}, [\bm{f},[\bm{f},\bm{b}]], [\bm{f},[\bm{f},[\bm{f},\bm{b}]]], [\bm{f},[\bm{f},[\bm{f},[\bm{f},\bm{b}]]]], [\bm{f},[\bm{f},[\bm{f},[\bm{f},[\bm{f},\bm{\bm{b}}]]]]]  \}_{\bm{x}_0}.
\end{align*}

\textbf{Codes for Mathematica}

\text{Clear}[\text{Global$\grave{ }$*}]

\begin{doublespace}
\noindent\(\pmb{\text{}}\)
\end{doublespace}

\text{(*Logistic Wind Model*)}\\
\text{vw} = \text{W0}/(1 + \text{Exp}[-h/\text{$\delta$}]); \text{(* Wind Velocity*)}\\
\text{vwDot} = \text{W0} \text{Exp}[-h/\text{$\delta$}] V \text{Sin}[$\gamma$ ]/ (\text{$\delta$} (1 + \text{Exp}[-h/$\delta])^{\wedge}2)$;

\begin{doublespace}
\noindent\(\pmb{\text{}}\)
\end{doublespace}

\text{(*Provide energy gain cost function*)}\\
J= -V \text{vwDot} \text{Cos}[$\gamma$ ] \text{Sin}[$\psi$ ]/g;

\begin{doublespace}
\noindent\(\pmb{\text{}}\)
\end{doublespace}

\text{(*}\text{Write} \text{the} \text{states}, \text{vector} \text{fields} \text{for} \text{the} \text{system}\text{*)}\\
\text{stateX}=\{\text{xpos},\text{ypos},h,V, $\gamma$ , $\psi$ ,$\phi$ \};\\
\text{f1}= V \text{Cos}[$\gamma$ ] \text{Cos}[$\psi$ ] ;\\
\text{f2}=V \text{Cos}[$\gamma$ ] \text{Sin}[$\psi$ ]-\text{vw};\\
\text{f3}=V \text{Sin}[$\gamma$ ];\\
\text{f4}=1/m (-\text{drag}-m g \text{Sin}[$\gamma$ ] + m \text{vwDot} \text{Cos}[$\gamma$ ] \text{Sin}[$\psi$ ]);\\
\text{f5}=(1/(m V ))(\text{lift} \text{Cos}[$\phi$ ]-m\text{  }g \text{Cos}[$\gamma$ ] - m \text{vwDot} \text{Sin}[$\psi$ ] \text{Sin}[$\gamma$ ]) ;\\
\text{f6}=(1/(m V \text{Cos}[$\gamma$ ]))(\text{lift} \text{Sin}[$\phi$ ] + m \text{vwDot} \text{Cos}[$\psi$ ]);\\
\text{f7} = 0;\\
f=\{\text{f1},\text{f2},\text{f3},\text{f4},\text{f5},\text{f6},\text{f7}\};\\
\text{b1}=\{0,0, 0,0,0,0,1\};

\begin{doublespace}
\noindent\(\pmb{\text{}}\)
\end{doublespace}

\text{(*Find Lie brackets of length 2*)}\\
\text{partialf} = D[f,\{\text{stateX}\}];\\
\text{partialb1} = D[\text{b1}, \{\text{stateX}\}];\\
\text{(*}[f,\text{b1}]\text{*)}\\
\text{liebracketfb1} = \text{partialb1}.f - \text{partialf}.\text{b1};

\begin{doublespace}
\noindent\(\pmb{\text{}}\)
\end{doublespace}

\text{(*Find Lie brackets of length 3*)}\\
\text{partialLieBracketfb1}= D[\text{liebracketfb1},\{\text{stateX}\}];\\
\text{(*}[f,[f,\text{b1}]]\text{*)}\\
\text{liebracketfAndfb1} = \text{partialLieBracketfb1}.f-\text{partialf}.\text{liebracketfb1};\\
\text{(*}[\text{b1},[f,\text{b1}]\text{*)}\\
\text{liebracketb1Andfb1}= \text{partialLieBracketfb1}.\text{b1}-\text{partialb1}.\text{liebracketfb1};\\

\begin{doublespace}
\noindent\(\pmb{\text{}}\)
\end{doublespace}

\text{(*Find Lie brackets of length 4*)}\\
\text{partialLieBracketfAndfb1} = D[\text{liebracketfAndfb1},\{\text{stateX}\}];\\
\text{(*} [f,[f,[f,\text{b1}]]]\text{*)}\\
\text{liebracketfffb1} = \text{partialLieBracketfAndfb1}.f - \text{partialf}.\text{liebracketfAndfb1};\\

\begin{doublespace}
\noindent\(\pmb{\text{}}\)
\end{doublespace}

\text{(*Find Lie brackets of length 5*)}\\
\text{(*} [f,[f,[f,[f,\text{b1}]]]]\text{*)}\\
\text{partialliebracketfffb1} = D[\text{liebracketfffb1},\{\text{stateX}\}];\\
\text{liebracketffffb1} = \text{partialliebracketfffb1}.f-\text{partialf}.\text{liebracketfffb1};\\

\begin{doublespace}
\noindent\(\pmb{\text{}}\)
\end{doublespace}

\text{(*Find Lie brackets of length 6*)}\\
\text{(*} [f,[f,[f,[f,[f,\text{b1}]]]]]\text{*)}\\
\text{partialliebracketffffb1} = D[\text{liebracketffffb1},\{\text{stateX}\}];\\
\text{liebracketfffffb1} = \text{partialliebracketffffb1}.f-\text{partialf}.\text{liebracketffffb1};

\end{document}